\title{Controlling Canard Cycles}
\author{Hildeberto Jard\'on-Kojakhmetov and
Christian Kuehn
}
\newtheorem{definition}{Definition}
\newtheorem{remark}{Remark}
\newtheorem{theorem}{Theorem}
\newtheorem{proposition}{Proposition}
\newtheorem{lemma}{Lemma}
\newcommand{\ve}{\varepsilon}
\newcommand{\R}{\mathbb{R}}
\newcommand{\N}{\mathbb{N}}
\newcommand{\bbS}{\mathbb{S}}
\newcommand{\cA}{\mathcal{A}}
\newcommand{\cB}{\mathcal{B}}
\newcommand{\cC}{\mathcal{C}}
\newcommand{\cE}{\mathcal{E}}
\newcommand{\cI}{\mathcal{I}}
\newcommand{\cM}{\mathcal{M}}
\newcommand{\cN}{\mathcal{N}}
\newcommand{\cO}{\mathcal{O}}
\newcommand{\cS}{\mathcal{S}}
\newcommand{\cW}{\mathcal{W}}
\newcommand{\txten}{\textnormal{en}}
\newcommand{\txtex}{\textnormal{ex}}
\newcommand{\txta}{\textnormal{a}}
\newcommand{\txtr}{\textnormal{r}}
\newcommand{\txtD}{\textnormal{D}}
\newcommand{\cl}{\textnormal{cl}}
\newcommand{\txtmin}{\textnormal{min}}
\newcommand{\txtmax}{\textnormal{max}}
\newcommand{\bx}{\bar{x}}
\newcommand{\by}{\bar{y}}
\newcommand{\bz}{\bar{z}}
\newcommand{\br}{\bar{r}}
\newcommand{\bu}{\bar{\mu}}
\newcommand{\bv}{\bar{v}}
\newcommand{\be}{\bar{\ve}}
\newcommand{\ba}{\bar{\alpha}}
\newcommand{\ch}[1]{{\textcolor{black}{#1}}}
\newcommand{\chh}[1]{{\textcolor{black}{#1}}}
\begin{document}

\maketitle

\begin{abstract}
Canard cycles are periodic orbits that appear as special solutions of fast-slow systems (or singularly perturbed ordinary differential equations). It is well known that canard cycles are difficult to detect, hard to reproduce numerically, and that they are sensible to exponentially small changes in parameters. In this paper we combine techniques from geometric singular perturbation theory, the blow-up method, and control theory, to design controllers that stabilize canard cycles of planar fast-slow systems with a folded critical manifold. As an application, we propose a controller that produces stable mixed-mode oscillations in the van der Pol oscillator. 
\end{abstract}

\section{Introduction}\label{sec:introduction}

Fast-slow systems (also known as singularly perturbed ordinary differential equations, see more details in Section 
\ref{sec:preliminaries}) are often used to model phenomena occurring in two or more time scales. Examples of these 
are vast and range from oscillatory patters in biochemistry and neuroscience 
\cite{ermentrout2010mathematical,izhikevich2007dynamical,banasiak2014methods,hek2010geometric}, 
all the way to stability analysis and control of power networks \cite{chow1982time,dorfler2012synchronization}, 
among many others \cite[Chapter 20]{kuehn2015multiple}. The overall idea behind the analysis of fast-slow systems 
is to separate the behavior that occurs at each time scale, understand such behavior, and then try to elucidate 
the corresponding dynamics of the full system. Many approaches have been developed, such as asymptotic methods 
\cite{eckhaus2011matched,kevorkian2012multiple,OMalley1991,sanders2007averaging}, numeric and computational tools 
\cite{hairer2010solving,kaper2015geometry}, and geometric techniques 
\cite{fenichel1979geometric,jones1995geometric,kaper1999intro}, see also 
\cite{kuehn2015multiple,mishchenko1980differential,verhulst2005methods}. In this article we take a geometric approach.

Although the time scale separation approach has been very fruitful, there are some cases in which it does not suffice to completely describe the dynamics of a fast-slow system, see the details in Section \ref{sec:preliminaries}. The reason is that, for some systems, the fast and the slow dynamics are interrelated in such a way that some complex behavior is only discovered when they are not fully separated. An example of the aforementioned situation are the so-called \emph{canards} \cite{benoit1981chasse,benoit1990canards,dumortier1996canard}, see Section \ref{sec:canards} for the appropriate definition. Canards are orbits that, counter-intuitively, stay close for a considerable amount of time to a repelling set of equilibrium points of the fast dynamics. Canards are extremely important not only in the theory of fast-slow systems, but also in applied sciences, and especially in neuroscience, as they have allowed, for example, the detailed description of the very fast onset of large amplitude oscillations due to small changes of a parameter in neuronal models \cite{ermentrout2010mathematical,izhikevich2007dynamical} and of other complex oscillatory patterns \cite{brons1991canard,desroches2012mixed,moehlis2002canards}. Due to their very nature, canard orbits are not robust, meaning that small perturbations may drastically change the shape of the orbit.

On the other hand the application of singular perturbation techniques in control theory is far-reaching. Perhaps, as already introduced above, one of the biggest appeals of the theory of fast-slow systems is the time scale separation, which allows the reduction of large systems into lower dimensional ones for which the control design is simpler \cite{jardon2017model,kokotovic1976singular,kokotovic1980singular}. Applications range from the control of robots \cite{siciliano1988singular,spong1990control,jardon2016model}, all the way to industrial biochemical processes, and large power networks \cite{dmitriev2006singular,kokotovic1999singular,kokotovic1984applications,kurina2017discrete,naidu2001singular,naidu2002singular}. However, as already mentioned, not all fast-slow systems can be analyzed by the convenient time scale separation strategy, and although some efforts from very diverse perspectives have been made \cite{ARTSTEIN20171603,1531-3492_2019_8_3653,artstein1997tracking,artstein2002singularly,Fridman2001ADS,Fridman2001StatefeedbackHC,jardon2017model,jardon2019stabilization}, a general theory that includes not only the regulation problem but also the path following and trajectory planning problems is, to date, lacking.

The main goal of this article is to merge techniques of fast-slow dynamical systems with control theory methods to develop controllers that stabilize canard orbits. The idea of controlling canards has already been explored in \cite{durham2008feedback}, where an integral feedback controller is designed for the FitzHugh-Nagumo model to steer it towards the so-called ``canard regime''. In contrast, here we take a more general and geometric approach by considering the \ch{folded} canard normal form, see Section \ref{sec:canards}. Moreover, we integrate control techniques with Geometric Singular Perturbation Theory (GSPT) and propose a controller design methodology in the blow-up space. Later we apply such geometric insight to the van der Pol oscillator where we provide a controller that produces any oscillatory pattern allowed by the geometric properties of the model, see Section \ref{sec:vdp}.

The rest of this document is arranged as follows: in Section \ref{sec:preliminaries} we present definitions and preliminaries of the geometric theory of fast-slow systems and of \ch{folded} canards, which are necessary for the main analysis. In Section \ref{sec:fold} we develop a controller that stabilizes folded canard orbits, where the main strategy is to combine the blow-up method with state-feedback control techniques to achieve the goal. Afterwards in Section \ref{sec:vdp}, as an extension to our previously developed controller, we develop a controller that stabilizes several canard cycles and is able to produce robust complex oscillatory patters in the van der Pol oscillator. We finish in Section \ref{sec:conclusions} with some concluding remarks and an outlook. 


\section{Preliminaries}\label{sec:preliminaries}
A fast-slow system is a singularly perturbed ordinary differential equation (ODE) of the form
\begin{equation}\label{eq:sfs1}
	\begin{split}
		\ve\dot x &= f(x,y,\ve,\lambda)\\
		\dot y    &= g(x,y,\ve,\lambda),
	\end{split}
\end{equation}
where $x\in\R^m$ is the \emph{fast variable}, $y\in\R^n$ the \emph{slow variable}, $0<\ve\ll1$ is a small parameter accounting for the time scale separation between the aforementioned variables, $\lambda\in\R^p$ denotes other parameters, and $f$ and $g$ are assumed sufficiently smooth. In this document the over-dot is used to denote the derivative with respect to the \emph{slow} time $\tau$. It is well-known that, for $\ve>0$, an equivalent way of writing \eqref{eq:sfs1} is
\begin{equation}\label{eq:sfs2}
	\begin{split}
		x' &= f(x,y,\ve,\lambda)\\
		y' &= \ve g(x,y,\ve,\lambda),
	\end{split}
\end{equation}
where now the prime denotes the derivative with respect to the \emph{fast} time $t\coloneqq \tau/\ve$. 

One of the mathematical theories that is concerned with the analysis of \eqref{eq:sfs1}-\eqref{eq:sfs2} is Geometric Singular Perturbation Theory (GSPT) \cite{kuehn2015multiple}. The overall idea of GSPT is to study the limit equations that result from setting $\ve=0$ in \eqref{eq:sfs1}-\eqref{eq:sfs2}. Then, one looks for invariant objects that can be shown to persist up to small perturbations. Such invariant objects give a qualitative description of the behavior of \eqref{eq:sfs1}-\eqref{eq:sfs2}. Accordingly, setting $\ve=0$ in \eqref{eq:sfs1}-\eqref{eq:sfs2} one gets
\begin{equation}\label{eq:reduced}
	\begin{split}
		0 &= f(x,y,0,\lambda)\\
		\dot y    &= g(x,y,0,\lambda)
	\end{split}\qquad\hspace{2cm}\qquad
	\begin{split}
		x' &= f(x,y,0,\lambda)\\
		y' &= 0,
	\end{split}
\end{equation}
known, respectively, as the \emph{reduced slow subsystem} (which is a Constrained Differential Equation \cite{takens1976constrained} or a Differential Algebraic Equation \cite{kunkel2006differential}) and \emph{the layer equation}. The aforementioned limit systems are not equivalent any more, but they are related by the following important geometric object.
\begin{definition}[The critical manifold] The critical manifold is defined as
\begin{equation}
	\cC_0=\left\{ (x,y)\in\R^m\times\R^n\,|\, f(x,y,0,\lambda)=0 \right\}.
\end{equation}
\end{definition}

We note that the critical manifold is the phase-space of the reduced slow subsystem and the set of equilibrium points of the layer equation. The properties of the critical manifold are essential to GSPT, in particular the following.
\begin{definition}[Normal hyperbolicity]
	Let $p\in\cC_0$. We say that $p$ is hyperbolic if the matrix $\txtD_xf(p,0,\lambda)|_{\cC_0}$ has all its eigenvalues away from the imaginary axis. If every point $p\in\cC_0$ is hyperbolic, we say that $\cC_0$ is \emph{normally hyperbolic}. On the contrary, if for some $p\in\cC_0$ the matrix $\txtD_xf(p,0,\lambda)|_{\cC_0}$ has at least one of its eigenvalues on the imaginary axis, then we say that $p$ is a non-hyperbolic point.
\end{definition}

It is known from Fenichel's theory \cite{fenichel1971persistence,fenichel1979geometric} that a \emph{compact and normally hyperbolic} critical manifold $\cS_0\subseteq\cC_0$ of \eqref{eq:reduced} persists as a locally invariant \emph{slow} manifold $\cS_\ve$ under sufficiently small perturbations. In other words, Fenichel's theory guarantees that in a neighborhood of a normally hyperbolic critical manifold the dynamics of \eqref{eq:sfs1}-\eqref{eq:sfs2} are well approximated by the limit systems \eqref{eq:reduced}. 
\begin{remark}
	Along this paper we use the notation $\cS_0^\txta$ and $\cS_0^\txtr$ to denote, depending on the eigenvalues of $\txtD_xf(x,y,0,\lambda)|_{\cS_0}$, the attracting an repelling parts of the (compact) critical manifold $\cS_0$. Accordingly, the corresponding slow manifolds are denoted as $\cS_\ve^\txta$ and $\cS_\ve^\txtr$.
\end{remark}

On the other hand, critical manifolds may lose normal hyperbolicity, for example, due to singularities of the layer equation, see Figure \ref{fig:fold-red-flow}. It is in fact due to loss of normal hyperbolicity that, as in this paper, some interesting and complicated dynamics may arise in seemingly simple fast-slow systems. Fenichel's theory, however, does not hold in the vicinity of non-hyperbolic points. In some cases, depending on the nature of the non-hyperbolicity, the \emph{blow-up method}~\cite{jardon2019survey} is a suitable technique to analyze the complicated dynamics that arise. In the forthcoming section we introduce the particular type of orbits that we are concerned with and that arise due to loss of normal-hyperbolicity of the critical manifold; the so-called \emph{canards}. 

\subsection{\ch{Planar Folded Canards}}\label{sec:canards} 

\ch{In this section we briefly describe folded canards and folded canard cycles in the plane. As we mention below, the adjective ``folded'' is due to a fold singularity. However, we remark that canards (and canard cycles) can be related to other types of singularities}. The interested reader is refereed to, e.g. \cite{dumortier1996canard,krupa2001extending,wechselberger2012propos}, references therein and, in particular, \cite[Chapter 8]{kuehn2015multiple} and \cite[Section 3]{jardon2019survey} for more detailed information.

Let us start by recalling that the canonical form of a canard point \cite{krupa2001extending} is given by
\begin{equation}\label{eq:fold1}
	\begin{split}
		x' &= -y h_1(x,y,\ve,\alpha) + x^2h_2(x,y,\ve,\alpha) + \ve h_3(x,y,\ve,\alpha)\\
		y' &= \ve\left(xh_4(x,y,\ve,\alpha)-\alpha h_5(x,y,\ve,\alpha) +y h_6(x,y,\ve,\alpha)\right),
	\end{split}
\end{equation}
where $(x,y)\in\R^2$, $0<\ve\ll1$, and \ch{$\alpha$ is a parameter}. Furthermore
\begin{equation}\label{eq:fold-hot}
	\begin{split}
		h_3(x,y,\ve,\alpha) &= \cO(x,y,\ve,\alpha)\\
		h_i(x,y,\ve,\alpha) &= 1+\cO(x,y,\ve,\alpha),\qquad i=1,2,4,5,
	\end{split}
\end{equation}
and $h_6$ is smooth. For simplicity of notation, we rewrite \eqref{eq:fold1} together with \eqref{eq:fold-hot} as
\begin{equation}\label{eq:fold2}
	\begin{split}
		x' &= -y+x^2 + \widetilde f(x,y,\ve,\alpha)\\
		y' &= \ve(x-\alpha + \widetilde g(x,y,\ve,\alpha)),
	\end{split}
\end{equation}
where \chh{$\widetilde f$ and $\widetilde g$ denote the corresponding higher order terms, that is:
\begin{equation}\label{eq:fg}
	\begin{split}
		\widetilde f(x,y,\ve,\alpha) &\coloneqq  -y\cO(x,y,\ve,\alpha)+x^2\cO(x,y,\ve,\alpha)+\ve\cO(x,y,\ve,\alpha),\\
		\widetilde g(x,y,\ve,\alpha) &\coloneqq  x\cO(x,y,\ve,\alpha)-\alpha\cO(x,y,\ve,\alpha) +yh_6(x,y,\ve,\alpha).
	\end{split}
\end{equation}
}
The critical manifold is locally (near the origin) a \ch{perturbed} parabola and is given by
\begin{equation}
	\cS_0 = \left\{ (x,y)\in\R^2\,|\, -y+x^2+ \widetilde f(x,y,0,\alpha)=0\right\}.
\end{equation}
The (slow and fast) reduced flow corresponding to \eqref{eq:fold2} is as shown in Figure \ref{fig:fold-red-flow}.
\begin{figure}[htbp]
	\begin{tikzpicture}
		\node at (0,0){
		\includegraphics[scale=.65]{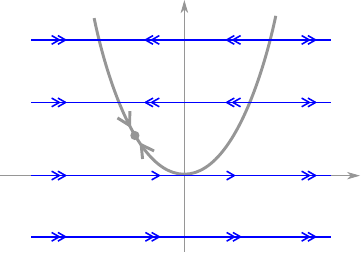}
		};
		\node at (1.9,-.35) {$x$};
		\node at (0.07,1.55) {$y$};
		\node at (-1.,1.5) {$\cS_0^\txta$};
		\node at ( 1,1.5) {$\cS_0^\txtr$};
		\node[anchor=center] at (0.07,-2){$\alpha<0$};
	\end{tikzpicture}\hfill
	\begin{tikzpicture}
		\node at (0,0){
		\includegraphics[scale=.65]{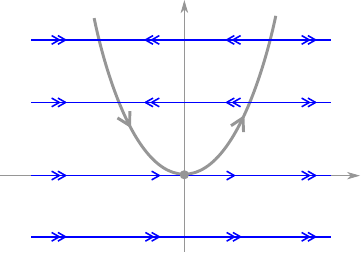}
		};
		\node at (1.9,-.35) {$x$};
		\node at (0.07,1.55) {$y$};
		\node at (-1.,1.5) {$\cS_0^\txta$};
		\node at ( 1,1.5) {$\cS_0^\txtr$};
		\node[anchor=center] at (0.07,-2){$\alpha=0$};
	\end{tikzpicture}\hfill
	\begin{tikzpicture}
		\node at (0,0){
		\includegraphics[scale=.65]{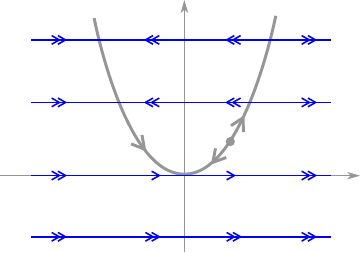}
		};
		\node at (1.9,-.35) {$x$};
		\node at (0.07,1.55) {$y$};
		\node at (-1.,1.5) {$\cS_0^\txta$};
		\node at ( 1,1.5) {$\cS_0^\txtr$};
		\node[anchor=center] at (0.07,-2){$\alpha>0$};
	\end{tikzpicture}
	\caption{Singular flow of \eqref{eq:fold2} near the origin. The gray parabola depicts the critical manifold $\cS_0$ which is partitioned in its attracting $\cS_0^\txta=\cS_0|_{\left\{ x<0\right\}}$ and repelling $\cS_0^\txtr=\cS_0|_{\left\{ x>0\right\}}$ parts, while the origin (the fold point) is non-hyperbolic. If $\alpha=0$ the origin is also called \emph{canard point}. In this latter case, the orbit along the critical manifold is also known as \emph{singular maximal canard}.}
	\label{fig:fold-red-flow}
\end{figure}

\begin{remark}\label{rem:H}
	To fix ideas, consider for a moment \eqref{eq:fold2} with zero higher order terms\footnote{Refer to \cite{krupa2001extending} for the much more complicated case that includes the higher order terms.}, that is
\begin{equation}\label{eq:fold3}
	\begin{split}
		x' &= -y+x^2\\
		y' &= \ve(x-\alpha).
	\end{split}
\end{equation}
Then, it is straightforward to check that, for $\ve>0$ and $\alpha=0$, the orbits of \eqref{eq:fold3} are given by level sets of 
\begin{equation}\label{eq:H}
	H(x,y,\ve)=\frac{1}{2}\exp\left( -\frac{2y}{\ve}\right)\left( \frac{y}{\ve}-\frac{x^2}{\ve}+\frac{1}{2} \right).
\end{equation}

Some orbits of \eqref{eq:fold3} are shown in Figure \ref{fig:canardsH}\ch{, and in fact it is known \cite{krupa2001extending} that canard cycles exist for $H\in(0,\frac{1}{4})$.}

\begin{figure}[htbp]\centering
\hspace*{1.5cm} 
	\begin{tikzpicture}
		\node at (0,0){
			\includegraphics{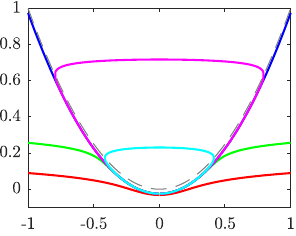}
		};
		\node[blue,anchor=west] at    (2.5,1.7){\small $H=0$};
		\node[magenta,anchor=west] at (2.5,1.){\small $H=1\times10^{-10}$};
		\node[cyan,anchor=west] at (2.5,.25){\small $H=.01$};
		\node[green,anchor=west] at (2.5,-.45){\small $H=-.01$};
		\node[red,anchor=west] at (2.5,-.95){\small $H=-10$};
		\node at (0.2,-2.2){$x$};
		\node at (-2.9,0.2){$y$};
		\node at (-1.6,1.5){$\cS_0^\txta$};
		\node at ( 2,1.5){$\cS_0^\txtr$};
	\end{tikzpicture}
	\caption{Orbits of \eqref{eq:fold3} obtained as level sets of \eqref{eq:H}. The dashed gray curve is the critical manifold. Compare with $\alpha=0$ in Figure \ref{fig:fold-red-flow}.}
	\label{fig:canardsH}
\end{figure}
\end{remark}

What is remarkable is that there are orbits that closely follow the unstable branch of the critical manifold for slow time of order $\cO(1)$. Such type of orbits are known as \emph{canards}. There is a particular canard, which is called \emph{maximal canard} and is given by $\left\{ H=0\right\}$ that connects the attracting slow manifold $\cS_\ve^\txta$ with the repelling one $\cS_\ve^\txtr$. More relevant to this paper are periodic orbits with canard portions, which called \emph{canard cycles}.

In the following section we design feedback controllers for \eqref{eq:fold1} that render a particular canard cycle asymptotically stable. In other words, we consider the path following control problem where a canard orbit is the reference.


\section{Controlling Folded Canards}\label{sec:fold}
We propose to study two control problems, namely
\begin{equation}\label{eq:fast_c_example}
	\begin{split}
		x' &= -y+x^2+ \widetilde f(x,y,\ve,\alpha) +u(x,y,\ve,\alpha)\\
		y' &= \ve(x-\alpha+\widetilde g(x,y,\ve,\alpha)),
	\end{split}
\end{equation}
which we call \emph{the fast control problem} and
\begin{equation}
	\begin{split}
		x' &= -y+x^2+ \widetilde f(x,y,\ve,\alpha) \\
		y' &= \ve(x-\alpha+\widetilde g(x,y,\ve,\alpha)+u(x,y,\ve,\alpha)),
	\end{split}
\end{equation}
to be referred to as \emph{the slow control problem}. Recall that $\widetilde f$ and $\widetilde g$ stand for the higher order terms as in \eqref{eq:fg}. The objective is to stabilize a certain reference canard cycle to be denoted by $\gamma_h$.

\clearpage

\begin{remark}\leavevmode
\begin{itemize}[leftmargin=*]
	\item The choice of the above control problems is motivated by applications, especially in neuron models, see \cite{durham2008feedback,izhikevich2007dynamical,ermentrout2010mathematical}, where the input current appears in the fast (voltage) variable and regulates the distinct firing patterns. However, if one is interested in the fully actuated case, a combination of the techniques presented here shall also be useful.
	\item Throughout this document we assume that one has full knowledge of the functions $\widetilde f$ and $\widetilde g$. This means that for the fast (resp. slow) control problem we assume $\widetilde f=0$ (resp. $\widetilde g=0$). Otherwise one considers a controller of the form $u=-\widetilde f+v$ (resp. $u=-\widetilde g+v$) where now $v$ is to be designed.
\end{itemize}
\end{remark}

 \ch{Notice that in the case of the fast-control problem \eqref{eq:fast_c_example}, the controller changes the fast dynamics. This means that the controller can change the type of singularities the critical manifold may present.
	To be more precise, consider for a moment \eqref{eq:fast_c_example} with $u=-kx$, $k>0$, a simple proportional feedback controller. The closed-loop system then reads as
	\begin{equation}
		\begin{split}
		x' &= -kx-y+x^2+ \widetilde f(x,y,\ve,\alpha)\\
		y' &= \ve(x-\alpha+\widetilde g(x,y,\ve,\alpha)),
		\end{split}
	\end{equation}
	for which the origin is now normally hyperbolic. This means that the feedback controller has changed the type of singularity (at the origin) from a fold to a regular one. It is clear that these type of controllers are not compatible with our task. So, we shall design controllers that \emph{do not change the type of singularity} of the open-loop system. To formalize what we mean by ``not changing the type of singularity'', let us first recall the following definition:
	\begin{definition}[$k$-jet equivalence]
		Let $F:\R^n\to\R^n$ and $G:\R^n\to\R^n$ be smooth maps. We say that $F$ and $G$ are ($k$-jet) equivalent at $p\in\R^n$ if $F(p)=G(p)$ and $F(x)-G(x)=\mathcal{O}(||x-p||^{k+1})$ as $x\to p$. An equivalence class defined by the previous notion of equivalence is called the $k$-jet of $F$ at $p$, and shall be denoted by $j^kF(p)$ \cite{arnold2012singularities}. 	
	\end{definition}
	Next we have a formal definition of what we refer to as a \emph{compatible controller}:
\begin{definition}[Compatible controller]\label{def:comp}
	Consider a control system
	\begin{equation}
		\dot\zeta = f(\zeta,\lambda,u), 
	\end{equation}
	where $\zeta\in\R^n$ is the state variable, $\lambda\in\R^p$ denotes system parameters (possibly including $0<\ve\ll1$) and $u\in\R^m$ stands for the controller. Suppose that for the open-loop system, that is when $u=0$, the origin $\zeta=0\in\R^n$ is a nilpotent equilibrium point of $\dot\zeta = f(\zeta,0,0)$ and that there is a $k\in\N$ such that $k$ is the smallest number so that $j^kf(0)\neq0$. Let $u$ be a state-feedback controller, that is \chh{$u=u(\zeta,\lambda,\ell)$, where $\ell\in\R^m$ stands for parameters of the controller such as controller gains,} and denote by $\dot\zeta=F(\zeta,\lambda,\ell)$ the closed-loop system. We say that $u$ is a \emph{compatible controller} if the open-loop vector field $f(\zeta,\lambda,0)$ and the closed-loop vector field $F(\zeta,\lambda,\ell)$ are \emph{$k$-jet equivalent} at the origin for $\lambda=0$.
\end{definition}
We emphasize that once one fixes coordinates on $\R^n$, a $k$-jet equivalence between two maps means that such maps coincide on their partial derivatives up to order $k$.\\
As an example of the above definition, recall that a planar fast-slow system with a generic fold at the origin is given by
\begin{equation}
	\begin{split}
		x' &= f(x,y,\ve)\\
		y' &= \ve g(x,y,\ve), 
	\end{split}
\end{equation}
with the defining conditions $f(0,0,0)=0$, $\frac{\partial f}{\partial x}(0,0,0)=0$, $\frac{\partial^2 f}{\partial x^2}(0,0,0)\neq0$, and the non-degeneracy condition $\frac{\partial f}{\partial y}(0,0,0)\neq0$. Next, let $u=u(x,y,\ve)$ be a state-feedback controller and suppose one considers the fast-slow control system 
\begin{equation}
	\begin{split}
		x' &= \underbrace{f(x,y,\ve)+u(x,y,\ve)}_{=:F(x,y,\ve)}\\
		y' &= \ve g(x,y,\ve).
	\end{split}
\end{equation}
Then, $u$ is a compatible controller if the closed-loop system verifies: $F(0,0,0)=0$, $\frac{\partial F}{\partial x}(0,0,0)=0$, $\frac{\partial^2 F}{\partial x^2}(0,0,0)\neq0$, and $\frac{\partial F}{\partial y}(0,0,0)\neq0$, which implies that the controller does not change the class of the singularity, since the origin is still a fold point of the closed-loop system.
}


\subsection{The fast control problem}\label{sec:fast}
\newcommand{\tx}{\hat{x}}
\newcommand{\tu}{\hat{u}}
\newcommand{\tg}{\hat{g}}
In this section we study the control problem defined by
\begin{equation}\label{eq:fast-a}
	\begin{split}
		x' &= -y + x^2 + u(x,y,\ve,\alpha)\\
		y' &= \ve(x-\alpha + \widetilde g(x,y,\ve,\alpha)).
	\end{split}
\end{equation} 

Due to the fact that the slow dynamics are not actuated, we are going to stabilize canards centered at $(x,y)=(\alpha,0)$. Then, it is convenient to define $\tx=x-\alpha$, which brings \eqref{eq:fast-a} into
\begin{equation}\label{eq:fast-b}
	\begin{split}
		\tx' &= -y+(\tx+\alpha)^2+\tu(\tx,y,\ve,\alpha)\\
		y'   &= \ve(\tx+\tg(\tx,y,\ve,\alpha)),
	\end{split}
\end{equation}
where $\tu(\tx,y,\ve,\alpha)=u(\tx+\alpha,y,\ve,\alpha)$ and similarly for $\tg$\chh{, we have $\tg(\tx,y,\ve,\alpha)=g(\tx+\alpha,y,\ve,\alpha)$}.

\begin{theorem}\label{thm:fold-fast}
	Consider \eqref{eq:fast-b} and let $\hat H=H(\tx,y,\ve)$ be defined by \eqref{eq:H}. Then, the following hold:
	\begin{enumerate}[leftmargin=*]

		\item \chh{The compatible controller 
		\begin{equation}\label{eq:thm_u1}
			\hat u=-2\alpha\tx-\alpha^2+c_1\tx\ve^{1/2}\exp(c_2y\ve^{-1})(\hat H-h),
		\end{equation}
		where $c_1>0$, $c_2\in\R$ and $h\leq\frac{1}{4}$, renders the canard orbit $\hat{\gamma}_h=\left\{ (\tx,y)\in\R^2\, | \, \hat H=h \right\}$ locally asymptotically stable for $\ve>0$ sufficiently small.
		}


		\item Let $\hat\Gamma\subset\R^2$ be a neigborhood of $\hat\gamma_h$ for $h\in(0,\frac{1}{4})$. Suppose that, additionally to \eqref{eq:fg}, $\hat g$ is of the form $\hat g=\tx\hat\phi(\tx,y,\ve,\alpha)$ for some  function $\hat\phi$, and that \chh{$\hat\phi\neq1$} for all $(\hat x,y)\in\hat\Gamma$. Then \chh{the compatible controller 
		\begin{equation}\label{eq:thm_u2}
			\hat u=-2\alpha\tx-\alpha^2+c_1\tx\ve^{1/2}(\hat H-h)\exp(c_2y\ve^{-1})-(y-\tx^2)\hat\phi,
		\end{equation}
		where $c_1>0$, $c_2\in\R$} renders the canard orbit $\hat{\gamma}_h=\left\{ (\tx,y)\in\R^2\, | \, \hat H=h \right\}$ locally \chh{(within $\hat\Gamma$)} asymptotically stable.
	\end{enumerate}
\end{theorem}
\ch{
	\begin{remark}\leavevmode
	\begin{itemize}
		\item The choice of the controller gain $c_2$ in Theorem \ref{thm:fold-fast} has an important impact in numerical simulations due to the fact of it appearing as an argument of the exponential function. The choice $c_2=2$ yields the better numerical results when stabilizing canard cycles, that is for $h\in(0,\frac{1}{4})$. However, to stabilize the maximal canard ($h=0$), it is necessary to choose $c_2<2$ to ensure that the controller remains bounded as $y\to\infty$. See more detail in section \ref{sec:K1}.
		\item We recall that although from Theorem \ref{thm:fold-fast} one is able to stabilize any canard (because $h\leq\frac{1}{4}$), canard cycles exist only for $h\in(0,\frac{1}{4})$, see Figure \ref{fig:canardsH} and \cite{krupa2001extending}.
		\item The second item of Theorem \ref{thm:fold-fast} holds for any $\ve>0$.
	\end{itemize}		
	\end{remark}
}

The proof of Theorem \ref{thm:fold-fast} follows from the forthcoming analysis and is summarized in section \ref{proof:fold-fast}. We show in Figure \ref{fig:thm-fast} a simulation of the results contained in Theorem \ref{thm:fold-fast}.

\begin{figure}[htbp]\centering
	\begin{tikzpicture}
		\node at (0,0){
			\begin{tikzpicture}
				\node at (0,0){
					\includegraphics{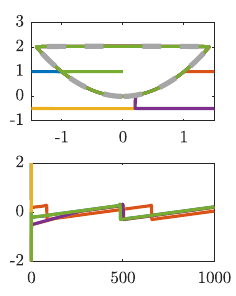}
				};
				\node at (0,-3){(a)};
			\end{tikzpicture}
		};
		\node at (5,0){
			\begin{tikzpicture}
				\node at (0,0){
					\includegraphics{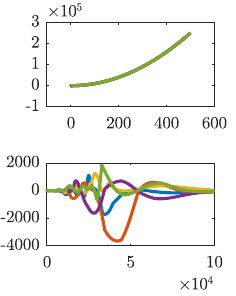}
				};
				\node at (0,-3){(b)};
			\end{tikzpicture} 
		};
		\node at (10,0){
			\begin{tikzpicture}
				\node at (0,0){
					\includegraphics{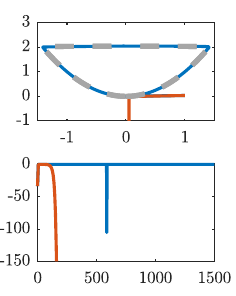}
				};
				\node at (0,-3){(c)};
			\end{tikzpicture} 
		};
	\end{tikzpicture}
	\caption{In all three columns we show, in the first row the $(\hat x,y)$ phase portrait of the closed-loop system \eqref{eq:fast-b} and in the second row the time-series of the corresponding controller. In all these simulations $\ve=0.01$. (a) The case for which $\hat g=0$ and with parameters $(\alpha,c_1,c_2,h)=(-0.1,1,2,\frac{1}{4}e^{-400})$. We remark here that in order for the constant $h=\frac{1}{4}e^{-400}$ to be numerically feasible one has to input $h\exp(c_2y\ve^{-1})=\frac{1}{4}\exp(c_2y\ve^{-1}-400)$ into the numerical integration algorithm. The desired canard cycle to be followed is shown in dashed-gray. (b) The maximal canard case with $\hat g=0$ and with parameters $(\alpha,c_1,c_2,h)=(0,1,2-e^{-15},0)$. Notice that, indeed, trajectories follow the unstable branch $\cS_\ve^\txtr$ for a large ``height'' and that the corresponding controller remains bounded. (c) An example of the effect of the extra term in \eqref{eq:thm_u2} where we show two trajectories with the same initial conditions. The unstable one is obtained with the controller \eqref{eq:thm_u1} while the stable one with \eqref{eq:thm_u2}. The desired canard cycle to be followed is shown in dashed-gray. The large spike in the controller is observed every time the trajectory crosses the $y$-axis long a fast fiber. For such simulation we have used $(\alpha,c_1,c_2,h)=(0,5,2,\frac{1}{4}e^{-400})$ and $g=100x(y-x^2)$. For more details see Sections \ref{sec:foldK2} and \ref{sec:K1}.}
	\label{fig:thm-fast}
\end{figure}

As already anticipated, the idea is to design the controller $\tu$ in the blow-up space.  Therefore, let us consider a coordinate transformation defined by
	\begin{equation}\label{eq:bu-fast}
		\tx=\br\bx,\quad y=\br^2\by, \quad \ve=\br^2\be, \quad \tu=\br^2\bu, \quad \alpha=\br\ba,
	\end{equation}
	where $(\bx,\by,\be,\bu,\ba)\in\mathbb{S}^{4}$\ch{, with $\bbS^4$ denoting the $4$-sphere, that is $\{\bx^2+\by^2+\be^2+\bu^2+\ba^2=1\}$,} and $\br\in[0,\infty)$. \ch{As is usual with the blow-up method \cite{jardon2019survey}, instead of working in spherical coordinates, we consider local coordinates in local charts. In our particular context, these local charts parametrize different hemi-spheres of $\bbS^4$.} Analogous to the analysis of the canard point in \cite{krupa2001extending} we consider the charts \ch{$K_1=\left\{ \by=1 \right\}$ and $K_2=\left\{ \be=1 \right\}$. To distinguish the local coordinates in these charts, we let $(r_1,x_1,\ve_1,\mu_1,\alpha_1)$ be local coordinates in $K_1$, and $(r_2,x_2,y_2,\mu_2,\alpha_2)$ be local coordinates in $K_2$. In this way, these local coordinates are defined by:}
	\begin{equation}\label{eq:Ks}
		\begin{aligned}
			K_1:& \quad \tx=r_1x_1, \quad y=r_1^2, \quad\quad \ve=r_1^2\ve_1, \quad \tu=r_1^2\mu_1, \quad \alpha=r_1\alpha_1, \qquad\qquad \textnormal{\ch{for $y\geq0$}},\\
			K_2:& \quad \tx=r_2x_2, \quad y=r_2^2y_2, \quad \ve=r_2^2, \quad \quad \tu=r_2^2\mu_2, \quad \alpha=r_2\alpha_2, \qquad\qquad \textnormal{\ch{for $\ve\geq0$}}.
		\end{aligned}
	\end{equation}

\ch{In particular, it is worth noting that in chart $K_1$ the coordinate $r_1$ is a rescaling of the ``original coordinate'' $y$ for $y\geq0$, while in chart $K_2$, the coordinate $r_2$ is a rescaling of $\ve\geq0$. Furtheremore, in a qualitative sense, in chart $K_1$ one studies trajectories of \eqref{eq:fast-b} as they approach and leave a small neighborhood of the fold point in the positive $y$ direction, while in chart $K_2$ one investigates the trajectories of \eqref{eq:fast-b} within a sufficiently small neighborhood of the fold point.}

The coordinates in the above charts are related by the \emph{transition maps}:
\begin{equation}\label{eq:match12}
		\kappa_{12}:K_1\to K_2, \qquad r_2=r_1\ve_1^{1/2}, \; x_2=x_1\ve_1^{-1/2}, \; y_2=\ve_1^{-1}, \; \mu_2=\mu_1\ve_1^{-1}, \; \alpha_2=\alpha_1\ve_1^{-1/2},
\end{equation}
for $\ve_1>0$ and
\begin{equation}\label{eq:match21}
	\kappa_{21}:K_2\to K_1,\qquad r_1=r_2y_2^{1/2}, \; x_1=x_2y_2^{-1/2},\; \mu_1=\mu_2y_2^{-1}, \; \ve_1=y_2^{-1}, \; \alpha_1=\alpha_2y_2^{-1/2},
\end{equation}
for $y_2>0$.

\subsubsection{Analysis in the rescaling chart \texorpdfstring{$K_2$}{K2}}\label{sec:foldK2}

\renewcommand{\bx}{x_2}
\renewcommand{\by}{y_2}
\renewcommand{\bz}{z_2}
\renewcommand{\br}{r_2}
\renewcommand{\bu}{\mu_2}
\renewcommand{\bv}{\nu_2}
\renewcommand{\be}{\ve_2}
\renewcommand{\ba}{\alpha_2}
\newcommand{\bg}{g_2}
\newcommand{\bphi}{\phi_2}
\newcommand{\bpsi}{\psi_2}

The blown-up (and desingularized) local vector field in this chart reads as
\begin{equation}\label{eq:K2-1}
	\begin{split}
		\bx' &= -\by + (\bx+\ba)^2 + \bu \\
		\by' &= \bx + \bg,
	\end{split}
\end{equation}
\chh{where $\bg=\bg(\br,\bx,\by,\ba)$ is smooth and defined by the blow-up of $\hat g$. More precisely, from \eqref{eq:fg} and keeping in mind the usual desingularization step, one has  that
\begin{equation}\label{eq:g2}
	\bg=\bx\cO(\br\bx,\br^2\by,\br^2,\br\ba)+\ba\cO(\br\bx,\br^2\by,\br^2,\br\ba)+\br\by\bar h_6(\bx,\by,\br,\ba),
\end{equation}
where $\bar h_6$ is smooth. Then, it is clear that $g_2\in\cO(\br)$.} Similarly, $\bu=\bu(\bx,\by,\br,\ba)$ is the blown-up state-feedback controller to be designed. Observe that, analogously to what is described in Remark \ref{rem:H}, we have that for $\br=\ba=\bu=0$ the orbits of \eqref{eq:K2-1} are given as level sets of the function
\begin{equation}\label{eq:H2}
	H_2(\bx,\by) = \frac{1}{2}\exp(-2\by)\left(\by-\bx^2+\frac{1}{2}\right).
\end{equation}

Having this in mind, we are going to design $\bu$ in such a way that for a trajectory $(\bx(t_2),\by(t_2))$ of \eqref{eq:K2-1} one has $\lim_{t_2\to\infty} H_2(\bx(t_2),\by(t_2))= h$, where $h$ defines the desired canard cycle and $t_2$ denotes the time-parameter of \eqref{eq:K2-1}.

We approach the design of $\bu$ as follows: we start by restricting to $\left\{ \br=0 \right\}$ and define $\widetilde{H}_2=H_2-h$,
where $h\in(0,\frac{1}{4})$\footnote{In principle our analysis holds for $h\leq\frac{1}{4}$, but only the considered interval provides canard cycles, which are our main focus. See also section \ref{sec:K1}.}. Next we define a candidate Lyapunov function given by
\begin{equation}\label{eq:K2-L}
	L_2(\bx,\by)=\frac{1}{2}\widetilde{H}_2^2,
\end{equation}
and note that $L_2>0$ for all $\widetilde{H}_2\neq0$ and that $L_2=0$ if and only if $\widetilde{H}_2=0$, if and only if $(\bx,\by)\in\gamma_{h}$, where by $\gamma_{h}$ we denote the reference canard cycle, that is
\begin{equation}
	\gamma_{h}=\left\{ (\bx,\by)\in\R^2\,|\, \widetilde{H}_2=0 \right\}.
\end{equation}
It follows that
\begin{equation}\label{eq:K2-L2p}
	L_2'=-\bx\exp(-2\by)\widetilde{H}_2\left( 2\ba\bx + \ba^2 + \bu^0 \right),
\end{equation}
where $\bu^0=\bu(0,\bx,\by,\ba)$. Naturally, we want to design $\bu^0$ such that $L_2'<0$, or at least $L_2'\leq0$. We now see that a convenient choice of $\bu^0$ is 
\begin{equation}\label{eq:K2-u20}
	\bu^0 = -2\ba\bx - \ba^2 + c_1\bx\exp(c_2\by)\widetilde{H}_2,
\end{equation}
where $c_1>0$ and $c_2\in\R$ are the controller gains. Using \eqref{eq:K2-u20} we have
\begin{equation}\label{eq:L2p}
	L_2' = -c_1\bx^2\exp((c_2-2)\by)\widetilde{H}_2^2\leq0.
\end{equation} 
Note that\ch{, because the exponential function is positive,} the previous inequality holds \ch{for every value of} $c_2\in\R$, however a particular choice of $c_2$ may drastically change the performance of the controller\ch{, hence its inclusion in \eqref{eq:K2-u20}}. \ch{This can be readily seen if we substitute $\widetilde{H}_2$ in \eqref{eq:K2-u20}:
\begin{equation}\label{eq:K2-u20-aux}
	\bu^0 = -2\ba\bx - \ba^2 + \frac{1}{2}c_1\bx\exp((c_2-2)\by)\left(\by-\bx^2+\frac{1}{2}\right)-c_1\bx\exp(c_2\by)h.
\end{equation}
Let $D\subset\R^3$ be a bounded domain. We see that $\mu_2^0$ is bounded for all $(\ba,\bx,\by)\in D$. However, since $c_2$ appears inside the exponential, the upper bound of $|\mu_2^0|$ can vary widely depending on the choice of $c_2$.
}
The relevance of $c_2$ shall be detailed in section \ref{sec:K1}. 

By Lasalle's invariance principle \cite{lasalle1960some} we have that, under the controller $\eqref{eq:K2-u20}$ and $\br=0$, the trajectories of \eqref{eq:K2-1} eventually reach the largest invariant set contained in 
\begin{equation}
	\cI=\left\{ (\bx,\by)\in\R^2\,|\, L_2'=0\right\}=\left\{ \bx=0 \right\}\cup\left\{ \widetilde{H}_2=0 \right\}
\end{equation}
Note, however that $\left\{ \bx=0 \right\}$ is generically not invariant for the closed-loop dynamics \eqref{eq:K2-1}. Indeed, the closed-loop system \eqref{eq:K2-1} (restricted to $\br=0$) reads as
\begin{equation}\label{eq:K2-cl1}
	\begin{split}
		\bx' &= -\by + \bx^2 + c_1\bx\exp(c_2\by)\widetilde{H}_2 \\
		\by' &= \bx,
	\end{split}
\end{equation}
where setting $\bx=0$ leads to $(\bx',\by')=(-\by,0)$. Therefore, we now have that all trajectories of \eqref{eq:K2-1}  eventually reach $\cI_2=\left\{ (\bx,\by)=(0,0) \right\}\cup\left\{ \widetilde{H}_2=0 \right\}$. Since the origin is an equilibrium point of \eqref{eq:K2-cl1}\footnote{In fact it is straightforward to further show that the origin is an unstable equilibrium point of \eqref{eq:K2-cl1}.}, we have that every trajectory with initial conditions $(\bx(0),\by(0))\in\R^2\backslash\left\{ (0,0) \right\}$ eventually reaches the set $\left\{\widetilde{H}_2=0\right\}$ as $t_2\to\infty$. With the previous analysis we have shown the following:

\begin{proposition}\label{prop:K2}
	\chh{Consider \eqref{eq:K2-1}. Then, for $\br\geq0$ sufficiently small a controller of the form
	 \begin{equation}\label{eq:K2-u2r}
	 	\begin{split}
		\bu =&  -2\ba\bx-\ba^2 + c_1\bx\exp(c_2\by)\left( H_2-h \right) +\cO(\br),
	\end{split} 
	 \end{equation}
	where $c_1>0$ and $c_2\in\R$ and with $H_2$ is as in \eqref{eq:H2}, renders the orbit $\gamma_{h}$ locally asymptotically stable.}
\end{proposition}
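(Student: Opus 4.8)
The plan is to regard $\br$ as a parameter rather than a dynamical variable: in the chart $K_2$ one has $\ve=\br^2$ fixed, so each slice $\{\br=\text{const}\}$ is invariant and \eqref{eq:K2-1} is a smooth one-parameter family of planar vector fields. The analysis preceding the statement already settles the singular member $\br=0$: the function $L_2=\tfrac12\widetilde H_2^2$ of \eqref{eq:K2-L} satisfies $L_2'\le0$ by \eqref{eq:L2p}, and LaSalle's principle together with the non-invariance of $\{\bx=0\}$ shows that $\gamma_h=\{\widetilde H_2=0\}$ attracts every trajectory of \eqref{eq:K2-cl1} except the unstable origin. Since the stabilising term of \eqref{eq:K2-u20} vanishes on $\gamma_h$, there the flow reduces to the conservative level-set dynamics of $H_2$, so $\gamma_h$ is a genuine periodic orbit. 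The task therefore reduces to showing that this attracting orbit persists, and stays attracting, for $\br>0$ small.

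The crux is to upgrade the LaSalle conclusion to \emph{hyperbolicity}, which is what makes the orbit robust. From \eqref{eq:L2p} and $L_2=\tfrac12\widetilde H_2^2$ one reads off the \emph{exact} scalar transverse equation $\dot{\widetilde H}_2=-c_1\bx^2\exp((c_2-2)\by)\widetilde H_2$ along closed-loop trajectories at $\br=0$; thus $\widetilde H_2$ itself is a transverse coordinate obeying a linear ODE with trajectory-dependent coefficient. Linearising along $\gamma_h$ and integrating over one period $T$ gives the transverse Floquet multiplier $m$ via
\[
\ln m=-c_1\int_0^T \bx(t_2)^2\exp\bigl((c_2-2)\by(t_2)\bigr)\,dt_2<0 ,
\]
the integral being strictly positive because the nonnegative integrand vanishes only at the two isolated instants where $\gamma_h$ meets $\{\bx=0\}$. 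Hence $\gamma_h$ is an exponentially (orbitally) attracting hyperbolic limit cycle of \eqref{eq:K2-cl1}, with a contraction rate tunable through $c_1$ and $c_2$.

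Granting hyperbolicity, I would close the argument by the standard persistence result for hyperbolic limit cycles: since the closed-loop family depends smoothly on $\br$ and $\gamma_h$ is a hyperbolic attractor of the $\br=0$ member, the implicit function theorem applied to the transverse Poincaré return map (equivalently, normal hyperbolicity / Fenichel theory) produces, for each sufficiently small $\br>0$, an attracting hyperbolic periodic orbit lying $\cO(\br)$-close to $\gamma_h$. The undetermined $\cO(\br)$ correction left free in \eqref{eq:K2-u2r} is then used to pin this persisting cycle onto $\gamma_h$ itself, or at worst to track it up to the announced $\cO(\br)$ error, which yields the local asymptotic stability of $\gamma_h$ asserted in the Proposition.

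The main difficulty, and the reason for routing the proof through hyperbolicity rather than a direct estimate, is that $L_2'$ in \eqref{eq:L2p} is only negative \emph{semi}definite: it degenerates on the whole line $\{\bx=0\}$. Consequently $L_2$ is not a strict Lyapunov function, the usual total-stability theorems do not apply verbatim, and for generic $\bg$ (one not divisible by $\bx$) the $\br$-perturbation of $\dot H_2$ cannot be absorbed inside $L_2'$ — precisely the obstruction removed by the divisibility hypothesis $\hat g=\hat x\hat\phi$ in the second part of Theorem \ref{thm:fold-fast}. I would therefore have to check carefully that the period-averaged contraction above is genuinely strict and survives the perturbation, and that the construction stays inside the region of the chart where the blow-up and the controller \eqref{eq:K2-u2r} remain valid; controlling the size of this neighbourhood relative to $\br$ is the delicate quantitative point.
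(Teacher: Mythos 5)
Your proof is correct, and on the singular slice $\left\{r_2=0\right\}$ it coincides with the paper's: the candidate Lyapunov function $L_2=\frac{1}{2}\widetilde{H}_2^2$, the semidefinite derivative \eqref{eq:L2p}, LaSalle, and the non-invariance of $\left\{x_2=0\right\}$ away from the (unstable) origin. Where you genuinely depart from the paper is the perturbation step. The paper dispatches $r_2>0$ with the single phrase ``regular perturbation arguments,'' which, applied only to the LaSalle conclusion (mere attractivity), is not by itself a justification: non-hyperbolic attractors need not persist. Your observation that the identity $\widetilde{H}_2'=-c_1x_2^2\exp((c_2-2)y_2)\widetilde{H}_2$ holds \emph{exactly} along closed-loop trajectories, so that the transverse Floquet multiplier of $\gamma_h$ equals $\exp\bigl(-c_1\int_0^T x_2^2\exp((c_2-2)y_2)\,dt_2\bigr)<1$ (the integrand vanishing only at the two isolated crossings of $\left\{x_2=0\right\}$), upgrades $\gamma_h$ to a hyperbolic attracting limit cycle at $r_2=0$; persistence under the smooth $\cO(r_2)$ perturbation is then a theorem rather than a gesture. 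This supplies precisely the content the paper leaves implicit. Your closing caveat is also well placed and consistent with the paper's own Lemma~\ref{prop:K2-r}: for generic $g_2$ the persisting attractor is only $\cO(r_2)$-close to $\gamma_h$, and exact invariance of $\gamma_h$ requires the divisibility $g_2=x_2\phi_2$ together with the correction $\nu_2=-(y_2-x_2^2)\phi_2$; so the phrase ``renders $\gamma_h$ locally asymptotically stable'' must be read modulo the unspecified $\cO(r_2)$ term in \eqref{eq:K2-u2r}, a looseness already present in the statement itself rather than in your argument.
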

\begin{proof}
	\ch{As described above, the stability of $\gamma_h$ for \eqref{eq:K2-u2r} is equivalent to the stability of the zero solution of
	\begin{equation}\label{H2aux}
		\tilde H_2'=-\bx\exp(-2\by)(2\ba\bx+\ba^2+\mu_2^0)+\cO(\br).
	\end{equation}
	Substituting \eqref{eq:K2-u20} in \eqref{H2aux} we get
	\begin{equation}\label{H2aux2}
		\tilde H_2'= -c_1\bx^2\exp((c_2-\by))\tilde H_2+\cO(\br).
	\end{equation}
	We have shown that for $\br=0$, the origin is locally asymptotically stable for \eqref{H2aux2}. An apparent issue in \eqref{H2aux2} is the term $x_2^2$. However, we have also shown that $\left\{ \bx=0 \right\}$ is not invariant. Therefore \eqref{H2aux2} is a particular case of the non-autonomous scalar equation
	\begin{equation}\label{H2aux3}
		\tilde H_2'=-a(t_2)\tilde H_2+\cO(\br),
	\end{equation}
	where $a(t_2)\geq0$ for all $t_2$ \emph{and} $a(t_2)>0$ for almost all $t_2$ (here $t_2$ is the time parameter in the chart $K_2$). The solution of the unperturbed equation \eqref{H2aux3} is $H_2(t_2)=k \exp\left(-\int_{t_0}^{t_2}a(s_2)\textnormal{d}s_2 \right)$, for some $k\in\R$. So, due to the properties of $a(t_2)$, the trivial solution of \eqref{H2aux3}, with $\br=0$, is asymptotically stable, which is preserved under sufficiently small perturbations $\cO(\br)$ \cite{coppel2006dichotomies}.
	}
\end{proof}

We show in Figure \ref{fig:K2} a simulation of the result postulated in Proposition \ref{prop:K2}. 

\begin{figure}[htbp]\centering
	\begin{tikzpicture}
		\node at (0,0){
			\includegraphics{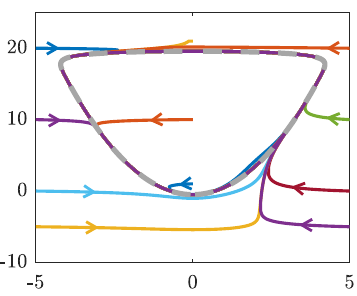}  
		};
		\node at (0.3,-2.75){$\bx$};
		\node at (-3.5,0){$\by$};			
		
	\end{tikzpicture}  
	\caption{Simulation of \eqref{eq:K2-1} with the controller \eqref{eq:K2-u20}. The parameters for the simulation are $(\br,\ba,c_1,c_2,h)=(0,1,1,2,1\times 10^{-16})$. The desired periodic orbit is depicted as the dashed curve.}
	\label{fig:K2}
\end{figure}

Let us emphasize at this point that designing the controller in the rescaling chart justifies using $H_2$ to define a convenient Lyapunov function, even if there are higher order terms in the original vector field \eqref{eq:fast-b}. We also point out that the maximal canard becomes unbounded in this chart. Such a case shall be studied in chart $K_1$ (see section \ref{sec:K1} below). Next we digress on how to deal with a certain class of higher order terms even if $\br$ \ch{(equivalently $\ve$)} is not small. 

\begin{lemma}\label{prop:K2-r} Consider \eqref{eq:K2-1} with $\br>0$ fixed and let $\Gamma_2\subset\R^2$ be a neighbourhood of $\gamma_h$. Assume that the function $g_2$ satisfies
\begin{enumerate}[leftmargin=*]
	\item $g_2=\bx\phi_2(\br,\bx,\by,\ba)$, where $\phi_2$ is smooth \chh{and vanishes at the origin}.
	\item The function $\phi_2$ satisfies \chh{$1+\phi_2(\br,0,\by,\ba)\neq0$} for all $(0,\by)\in\Gamma_2$.
	\item \ch{The function $\phi_2$ satisfies \chh{$1+\phi_2(\br,\bx,\by,\ba)\neq0$} for all $(\bx,\by)\in\gamma_h$}.

\end{enumerate}
Then, \chh{a controller of the form
	 \begin{equation}\label{eq:K2-u2r2}
	 	\begin{split}
		\bu =&  -2\ba\bx-\ba^2 + c_1\bx\exp(c_2\by)\left( H_2-h \right) - (\by-\bx^2)\bphi,
	\end{split} 
	 \end{equation}
	where $c_1>0$ and $c_2\in\R$,} renders $\gamma_{h}$ locally asymptotically stable in $\Gamma_2$.
\end{lemma}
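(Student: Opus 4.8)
The plan is to reuse the Lyapunov construction used for $\br=0$, exploiting the fact that the last term of \eqref{eq:K2-u2r2} is tailored to the higher order term $g_2=\bx\phi_2$. First I would substitute \eqref{eq:K2-u2r2} together with $g_2=\bx\phi_2$ (assumption~1) into \eqref{eq:K2-1}. As before, the piece $-2\ba\bx-\ba^2$ absorbs the cross terms of $(\bx+\ba)^2$, so the closed loop reads
\begin{align*}
	\bx' &= -\by+\bx^2+c_1\bx\exp(c_2\by)\widetilde{H}_2-(\by-\bx^2)\br\phi_2,\\
	\by' &= \bx(1+\br\phi_2).
\end{align*}

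Next I would differentiate $L_2=\tfrac12\widetilde{H}_2^2$ from \eqref{eq:K2-L} along this flow, using $\partial_{\bx}H_2=-\bx\exp(-2\by)$ and $\partial_{\by}H_2=(\bx^2-\by)\exp(-2\by)$. The decisive point is a cancellation of the two $\br\phi_2$ contributions: the term $-(\by-\bx^2)\br\phi_2$ in $\bx'$ contributes $+\bx(\by-\bx^2)\br\phi_2\exp(-2\by)$ to $H_2'$, while the term $\br\bx\phi_2$ in $\by'$ contributes $-\bx(\by-\bx^2)\br\phi_2\exp(-2\by)$, and the two are exactly opposite. Hence $H_2'=-c_1\bx^2\exp((c_2-2)\by)\widetilde{H}_2$, precisely the identity underlying the $\br=0$ case, so that
\begin{equation*}
	L_2'=-c_1\bx^2\exp((c_2-2)\by)\widetilde{H}_2^2\leq0 ,
\end{equation*}
recovering \eqref{eq:L2p}. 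This cancellation is the crux of the argument and is exactly why the extra term of \eqref{eq:K2-u2r2} is chosen as it is; it makes the estimate work for the fixed, not necessarily small, value of $\br$.

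With $L_2'\le0$ I would apply LaSalle's invariance principle on $\Gamma_2$ as in the proof of Proposition~\ref{prop:K2}: trajectories approach the largest invariant subset of $\{L_2'=0\}=\{\bx=0\}\cup\{\widetilde{H}_2=0\}$. To rule out $\{\bx=0\}$, I restrict the closed loop to it, where $\by'=0$ and $\bx'=-\by\bigl(1+\br\phi_2(\br,0,\by,\ba)\bigr)$; by assumption~2 this is nonzero for every $(0,\by)\in\Gamma_2$ with $\by\neq0$, so $\{\bx=0\}$ contains no invariant set other than the origin, which is an isolated (unstable) equilibrium lying off $\gamma_h$. Choosing $\Gamma_2$ as a neighbourhood of $\gamma_h$ excluding this equilibrium, every trajectory in $\Gamma_2$ converges to $\{\widetilde{H}_2=0\}=\gamma_h$.

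Finally I would confirm that $\gamma_h$ persists as a genuine canard cycle. On $\{\widetilde{H}_2=0\}$ the field becomes $\bx'=(\bx^2-\by)(1+\br\phi_2)$, $\by'=\bx(1+\br\phi_2)$, i.e.\ the $\br=0$ field on $\gamma_h$ scaled by $1+\br\phi_2$; by assumption~3 this scalar never vanishes on $\gamma_h$, so it is an orientation preserving time reparametrization and $\gamma_h$ stays invariant, free of equilibria, and is traversed periodically. Combined with $L_2'\le0$, this gives local asymptotic stability of $\gamma_h$ in $\Gamma_2$. The main obstacle is the bookkeeping behind the cancellation in $H_2'$, together with the realization that assumptions~2 and~3 are exactly the conditions needed to keep the LaSalle conclusion meaningful: assumption~2 preserves the non-invariance of $\{\bx=0\}$, while assumption~3 prevents the limiting orbit $\gamma_h$ from degenerating into a curve of equilibria.
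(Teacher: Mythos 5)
Your proposal is correct and follows essentially the same route as the paper: the same Lyapunov function $L_2=\tfrac12\widetilde H_2^2$, the same cancellation yielding $L_2'=-c_1\bx^2\exp((c_2-2)\by)\widetilde H_2^2\le 0$, and the same LaSalle argument using assumption~2 to break invariance of $\{\bx=0\}$ and assumption~3 to exclude equilibria on $\gamma_h$. The only cosmetic difference is that the paper leaves the extra term $\br\bv$ undetermined and reads off $\bv=-(\by-\bx^2)\phi_2$ from the expression for $L_2'$, whereas you verify the cancellation for the given controller directly.
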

\begin{proof}
	\chh{First, we recall that $g_2\in\cO(\br)$, see \eqref{eq:g2}. Therefore, under the assumptions of the Lemma we can write $g_2=\bx\bphi=\br\bx\bpsi$ for some function $\bpsi$. Next, and}
	similar to the analysis performed above, we consider \eqref{eq:K2-1} but now with an extra $\cO(\br)$-term in the controller, namely
	\begin{equation}
	\begin{split}
		\bx' &= -\by + (\bx+\ba)^2 + \bu^0 + \br\bv \\
		\by' &= \bx + \bg,
	\end{split}
\end{equation}
where $\bu^0$ is as in \eqref{eq:K2-u20} and now $\bv=\bv(\br,\bx,\by,\ba)$ is to be designed. 
Consider, as before, the candidate Lyapunov function \eqref{eq:K2-L}. After substituting $\bu^0$ \chh{and $\bg=\br\bx\bpsi$} we get
\begin{equation}
	L_2'= -c_1\bx^2\widetilde{H}_2^2\exp((c_2-2)\by) - \br\bx\widetilde{H}_2\exp(-2\by)(\bv+(\by-\bx^2)\chh{\bpsi}).
\end{equation}
The above expression suggests to set $\bv=-(y-\bx^2)\chh{\bpsi}$. By doing so one gets \eqref{eq:K2-L2p} again and therefore, invoking again Lasalle's invariance principle, we now take a look at the set $\cI=\left\{ \bx=0 \right\}\cup\left\{ \widetilde{H}_2=0 \right\}$ related to the closed-loop system. To be more precise we now focus on
\begin{equation}\label{eq:K2aux1}
	\begin{split}
		\bx' &= -\by + \bx^2 + c_1\bx\exp(c_2\by)\widetilde{H}_2 - (\by-\bx^2)\bphi \\
		\by' &= \bx + \bx\bphi,
	\end{split}
\end{equation}
\chh{where we have used $\br\bpsi=\bphi$, }and consider its dynamics restricted to $\cI$. On $\left\{ \bx=0 \right\}$ one has $(\bx',\by')=\left(-\by\left(1+\bphi|_{\left\{ \bx=0 \right\}}\right),0\right)$. Therefore, to avoid $\left\{ \bx=0 \right\}$ being invariant we impose the condition $1+\bphi(\br,0,\by,\ba)\neq0$. Note that the aforementioned condition would already suffice to show that trajectories converge towards $\left\{ \widetilde{H}_2=0 \right\}$, however, there may still be a stable equilibrium point contained in \ch{$\left\{ \widetilde{H}_2=0 \right\}$}.  The restriction of \eqref{eq:K2aux1} to $\left\{ \widetilde{H}_2=0 \right\}$ reads as
\begin{equation}\label{eq:K2aux2}
	\begin{split}
		\bx' &= (-\by + \bx^2)(1+ \bphi) \\
		\by' &= \bx(1 + \bphi),
	\end{split}\qquad\qquad (\bx,\by)\in\gamma_{h}.
\end{equation}
Now it suffices to give conditions on $\bphi|_{\left\{(\bx,\by)\in\gamma_{h}\right\}}$ such that \eqref{eq:K2aux2} does not have equilibrium points (keep in mind that $(0,0)\notin\gamma_h$ for $h\in(0,1/4)$). Such a condition is simply \ch{$1+\bphi(\br,\bx,\by,\ba)\neq0$ for all $(\bx,\by)\in\gamma_h$}, completing the proof. 
\end{proof}

\begin{remark}\leavevmode
\begin{itemize}[leftmargin=*]
	\item If the third assumption of Lemma \ref{prop:K2-r} does not hold, then trajectories converge to an equilibrium point contained in the set $\left\{\widetilde{H}_2=0\right\}$. 


	\item \chh{A simpler to check and sufficient condition on $\bphi$ satisfying the hypothesis of Lemma \ref{prop:K2-r} is \chh{$\bphi(\br,\bx,\by,\ba)\neq1$} for all $(\bx,\by)\in\Gamma_2$. \chh{Also, if $g=x\phi(x,y,\ve,\alpha)$ and $g_2=\bx\bphi$ is its blown-up version, then $\bphi(\br,\bx,\by,\ba)=\phi(\br\bx,\br^2\by,\br^2,\br\ba)=\phi(x,y,\ve,\alpha)$. Therefore, $\bphi\neq1$ implies $\phi\neq1$}. These two arguments are the ones we use for Theorem \ref{thm:fold-fast}.}
\end{itemize}
	 
\end{remark}

We show in Figure \ref{fig:K2-r1} a simulation regarding Lemma \ref{prop:K2-r}. 

\begin{figure}[htbp]\centering
	\begin{tikzpicture}	
		\node at (0,0){
			\includegraphics{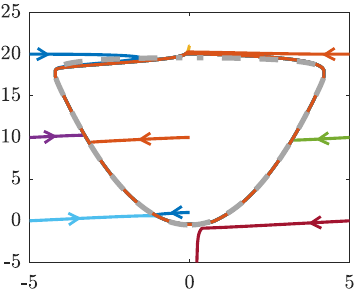}
		};
		\node at (0.3,-2.75){$\bx$};
		\node at (-3.5,0){$\by$};	
	\end{tikzpicture}	\hfill
	\begin{tikzpicture}	
		\node at (0,0){
			\includegraphics{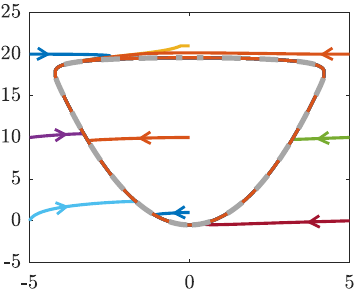}
		};
		\node at (0.3,-2.75){$\bx$};
		\node at (-3.5,0){$\by$};
	\end{tikzpicture}
	\caption{\ch{An example of a} phase portrait corresponding to \eqref{eq:K2-1} with \ch{the particular choice:} $\br=1$, $\ba=1$, $\bphi=\by-\bx^2$ and $(c_1,c_2)=(10,2)$. \ch{Due to the way the local coordinates in this chart are defined, choosing $r_2=1$, essentially amounts to considering $\ve=1$ in \eqref{eq:fast-b}.} On the left we show the orbits corresponding to $\bv=0$, and on the right those for $\bv$ given as in Lemma~\ref{prop:K2-r}. Observe on the left that trajectories do not follow the desired canard while on the right they do. This means that the extra term $\bv$ is suitable to render the canard asymptotically stable when the perturbations of order $\cO(\br)$ in \eqref{eq:K2-1} are not small.}
	\label{fig:K2-r1}
\end{figure}

\subsubsection{Analysis in the directional chart \texorpdfstring{$K_1$}{K1}}\label{sec:K1}

\renewcommand{\bx}{x_1}
\renewcommand{\by}{y_1}
\renewcommand{\bz}{z_1}
\renewcommand{\br}{r_1}
\renewcommand{\bu}{\mu_1}
\renewcommand{\bv}{\nu_1}
\renewcommand{\be}{\ve_1}
\renewcommand{\ba}{\alpha_1}
\renewcommand{\bg}{g_1}
\renewcommand{\bphi}{\phi_1}

We are now going to look at the controlled dynamics in the chart $K_1$. This serves two purposes: the first is of giving a more precise meaning to the constant $c_2$ in the controller \eqref{eq:K2-u2r}; the second is to corroborate that the controller designed previously is indeed able to also stabilize the (unbounded) maximal canard. \ch{Using the definition on $K_1$ as in \eqref{eq:Ks}, we have that} the dynamics in this chart read as

\begin{equation}\label{eq:K1}
	\begin{split}
		\br' &= \frac{1}{2}\br\be\bx+\cO(\br\be)\\
		\bx' &= -1+(\bx+\ba)^2-\frac{1}{2}\be\bx^2+\bu+\cO(\br\be)\\ 
		\be' &=-\be^2\bx+\cO(\br\be)\\
		\ba' &=-\frac{1}{2}\be\bx+\cO(\br\be),
	\end{split}
\end{equation}%
\ch{where, in particular, $\mu_1$ denotes the controller written in the local coordinates of this chart. Since we have already designed a controller in the chart $K_2$, see \eqref{eq:K2-u2r}, we can use the transformation \eqref{eq:match21} to express $\mu_1$ as
\begin{equation}\label{eq:K1u}
	\begin{split}
		\bu =\kappa_{21}(\mu_2)&=-2\ba\bx-\ba^2+c_1\be^{1/2}\bx\exp(c_2\be^{-1})\widetilde{H}_1+\cO(\br\be^{3/2}),
	\end{split}
\end{equation}
where, analogous to what we have done in chart $K_2$, we define $\widetilde{H}_1=H_1-h$ with
\begin{equation}\label{eq:H1}
	\begin{split}
		H_1 = \kappa_{21}(H_2) &= \frac{1}{2}\exp\left(-2\be^{-1}\right)\left(\be^{-1}-\be^{-1}\bx^2+\frac{1}{2}\right).
	\end{split}
\end{equation}
}

\begin{remark}\leavevmode
\begin{itemize}[leftmargin=*]
	\item $\bu$ is bounded along any reference canard $\gamma_h=\left\{ \widetilde{H}_1=0\right\}$ with $h\in(0,\frac{1}{4})$.
	\item If $h\neq0$, then $\bu$ becomes unbounded as $\be\to0$ unless $\widetilde{H}_1=0$ (previous observation). This is to be expected as, in the limit $\be\to0$ the only canard orbit to stabilize is the maximal canard since $\lim_{\be\to0}H_1=0$. Therefore, we are going to study the closed-loop dynamics \eqref{eq:K1} for the particular choice of $h=0$ and for the limit $\be\to0$. \ch{Our goal is to refine the constant $c_2$ so that $\mu_1$ remains bounded whenever $h=0$ and $\be\to0$. Moreover, recalling that for this chart we have $\be=\frac{y}{\ve}$, the limit $\be\to0$ corresponds to the limit $y\to\infty$ for fixed $\ve>0$.}
\end{itemize}
	
\end{remark}

So from now on we let $h=0$, that is $\widetilde{H}_1=H_1=\frac{1}{2}\exp\left(-2\be^{-1}\right)\left(\be^{-1}-\be^{-1}\bx^2+\frac{1}{2}\right)$. We also restrict to  $\left\{\br=0 \right\}$. In such a case we have
\begin{equation}\label{eq:K1u1}
	\bu=-2\ba\bx-\ba^2+\frac{1}{2}c_1\be^{1/2}\bx\exp\left((c_2-2)\be^{-1}\right)\left(\be^{-1}-\be^{-1}\bx^2+\frac{1}{2}\right),
\end{equation}
and the closed loop system reads as
\begin{equation}
	\begin{split}
		\bx' &= -1+\bx^2-\frac{1}{2}\be\bx^2+\frac{1}{2}c_1\be^{1/2}\bx\exp(c_2\be^{-1})H_1\\ 
		\be' &=-\be^2\bx\\
		\ba' &=-\frac{1}{2}\be\bx.
	\end{split}
\end{equation}
It shall also be relevant to consider $H_1'$, namely
\begin{equation}\label{eq:H1p}
\begin{split}
	H_1'&=-\frac{1}{2}c_1\be^{-1/2}\bx^2\exp\left( (c_2-2)\be^{-1}\right)H_1\\
	&=-\frac{1}{2}c_1\be^{-1/2}\bx^2\exp\left( (c_2-4)\be^{-1}\right)\left(\be^{-1}-\be^{-1}\bx^2+\frac{1}{2}\right).
\end{split}
\end{equation}

First of all we note that $\lim_{\be\to0} H_1=0$, and $\lim_{\be\to0} H_1'=0$ for $c_2<4$. Next, we focus on \eqref{eq:K1u1} where we observe that in order for the controller to be bounded as $\be\to0$the constant $c_2$ should be less than $2$. To be more precise:

\ch{
\begin{lemma}\label{lemma:K1_c2}
	Let $(\ba,\bx)$ be bounded and $c_1>0$. Then $\lim_{\be\to0}|\bu|<\infty$ if and only if $c_2<2$.
\end{lemma}
\begin{proof}
	Straightforward computations.
\end{proof}
From Lemma \ref{lemma:K1_c2} we have that, to follow the maximal canard ($h=0$) one must choose $c_2<2$ to ensure that the controller is bounded. Although analytically any choice of $c_2<2$ suffices, a particular choice may influence drastically numerical simulations since $c_2$ appears in the exponential. For instance, we see from the first line of \eqref{eq:H1p} that $c_2<2$ but arbitrarily close to $2$ reduces the contribution of the exponential term, which may induce issues in numerical simulations. 
For all other canards, $c_2\in\R$ is sufficient. However, again from the computational perspective, $c_2=2$ is the appropriate choice as it eliminates the exponential term in \eqref{eq:K1u1} and in \eqref{eq:H1p}, which is rather convenient for simulations. We remark that a completely analogous analysis, which we omit for brevity, follows for the chart $K_3=\left\{ \bar x=1 \right\}$ where canards corresponding to $h<0$ can be considered. The arguments and the conclusion are the same, namely, for $h<0$ one should set $c_2<2$ so that the controller remains bounded along the unbounded canards.
}

\subsubsection{Proof of Theorem \ref{thm:fold-fast}}\label{proof:fold-fast}
\renewcommand{\bx}{x_2}
\renewcommand{\by}{y_2}
\renewcommand{\bz}{z_2}
\renewcommand{\br}{r_2}
\renewcommand{\bu}{\mu_2}
\renewcommand{\bv}{\nu_2}
\renewcommand{\be}{\ve_2}
\renewcommand{\ba}{\alpha_2}
\renewcommand{\bg}{g_2}
\renewcommand{\bphi}{\phi_2}
\newcommand{\hx}{\hat x}

To prove Theorem \ref{thm:fold-fast} we first blow-down the controller $\bu$. To keep it simple we shall blow-down \eqref{eq:K2-u2r}, but of course the same holds for \eqref{eq:K2-u2r2}. So, recall from \eqref{eq:K2-u2r} that the blown-up controller is
\begin{equation}
	\bu=-2\ba\bx-\ba^2 + c_1\bx\exp(c_2\by)\left( H_2-h \right).
\end{equation}
Next, from \eqref{eq:Ks} we have
\begin{equation}\label{eq:u_proof}
	\hat u=\ve \bu = -2\alpha\hx - \alpha^2 + c_1\hx\ve^{1/2}\exp\left(c_2y\ve^{-1}\right)(\hat H-h),
\end{equation}
where $\hat H=\hat H(\hx,y,\ve)=\frac{1}{2}\exp\left( -\frac{2y}{\ve}\right)\left( \frac{y}{\ve}-\frac{\hx^2}{\ve}+\frac{1}{2} \right)$ as stated in the first item of Theorem \ref{thm:fold-fast}. Under \eqref{eq:u_proof} the closed-loop system corresponding to \eqref{eq:fast-b} reads as
\begin{equation}\label{eq:cl_proof}
	\begin{split}
		\hx' &= -y+\hx^2 + c_1\hx\ve^{1/2}\exp\left(c_2y\ve^{-1}\right)(\hat H-h)\\
		y' &= \ve(\hx+\widetilde g).
	\end{split}
\end{equation}
Next, it is important to observe that $\lim_{\ve\to0}\hat H=0$. This means that for $\ve=0$ the only reference canard that is reachable is the maximal canard\footnote{As it is expected, the controller becomes unbounded in the limit $\ve\to0$ for any other canard, as they do not exist in such a limit.}. The maximal canard corresponds to $h=0$. So, setting $h=0$, and since one chooses $c_2<2$ (recall Section \ref{sec:K1}), it follows that $\lim_{\ve\to0}c_1\hx\ve^{1/2}\exp\left(c_2y\ve^{-1}\right)\hat H=0$, meaning that the layer equation for \eqref{eq:cl_proof} is 
\begin{equation}
	\begin{split}
		\hx' &= -y+\hx^2 \\
		y' &= 0,
	\end{split}
\end{equation}
which indeed has the same type of singularity at the origin as the open-loop system, a fold. This shows that \eqref{eq:u_proof} is a compatible controller in the sense of Definition \ref{def:comp}.

\subsection{The slow control problem}\label{sec:slow}

In this section we consider the slow-control problem
\begin{equation}\label{eq:slow-thm}
	\begin{split}
		x' &= -y+x^2+ f(x,y,\ve,\alpha) \\
		y' &= \ve(x-\alpha+u(x,y,\ve,\alpha)),
	\end{split}
\end{equation}
where the objective is, as in Section \ref{sec:fast}, to stabilize a prescribed canard $\gamma_h$. Due to space constraints, and because the analysis is similar to the one performed in Section \ref{sec:fast}, we only state the relevant result.

\begin{theorem}\label{thm:slow}Consider \eqref{eq:slow-thm} and let $\hat H=H(x,y,\ve)$ be defined by \eqref{eq:H}. Then, \chh{the compatible controller
\begin{equation}\label{eq:thmslow_u1}
	 u=\alpha+c_1(y-x^2)\ve^{-1/2}\exp(c_2y\ve^{-1})(H-h),
\end{equation}
where $c_1>0$, $c_2\in\R$ and $h\leq\frac{1}{4}$} renders the canard orbit $\gamma_h=\left\{ (x,y)\in\R^2\, | \, H=h \right\}$ locally asymptotically stable for $\ve>0$ sufficiently small. A convenient choice of controller gain $c_2$ for the maximal canard is \ch{$c_2<2$}. By convenient we mean that such a choice ensures that the controller remains bounded as $y\to\infty$.
\end{theorem}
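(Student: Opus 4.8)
The plan is to follow the same blow-up plus Lyapunov strategy used for the fast control problem, adapting it to the fact that the actuation now sits in the slow equation. First I would observe that the term $+\alpha$ in \eqref{eq:thmslow_u1} exactly cancels the $-\alpha$ appearing in the slow drift of \eqref{eq:slow-thm}, so that the effective leading-order dynamics coincide with the $\alpha=0$ canard normal form centred at the origin; this is precisely what justifies using $H$ (rather than a shifted Hamiltonian) to define the reference cycle $\gamma_h$, and it means no $\hat x=x-\alpha$ shift is needed, in contrast to Section \ref{sec:fast}. I would then apply the blow-up \eqref{eq:bu-fast}--\eqref{eq:Ks} and work in the rescaling chart $K_2$, where the desingularized closed-loop system takes the form
\begin{equation*}
	\begin{split}
		x_2' &= -y_2 + x_2^2 + \cO(r_2)\\
		y_2' &= x_2 + c_1(y_2 - x_2^2)\exp(c_2 y_2)(H_2 - h) + \cO(r_2),
	\end{split}
\end{equation*}
with $H_2$ as in \eqref{eq:H2}; the controller now perturbs the $y_2$-equation rather than the $x_2$-equation.

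Next I would reuse the candidate Lyapunov function $L_2=\frac{1}{2}\widetilde H_2^2$ with $\widetilde H_2 = H_2-h$, restricted to $\{r_2=0\}$. The key point is that $\partial_{x_2}H_2 = -x_2\exp(-2y_2)$ and $\partial_{y_2}H_2 = \exp(-2y_2)(x_2^2-y_2)$, so the open-loop contribution to $H_2'$ cancels along the Hamiltonian flow and only the controller term survives, yielding
\begin{equation*}
	L_2' = -c_1 (y_2 - x_2^2)^2 \exp\!\big((c_2 - 2)y_2\big)\,\widetilde H_2^2 \le 0,
\end{equation*}
the exact analogue of \eqref{eq:L2p} but with the line $\{x_2=0\}$ replaced by the parabola $\{y_2=x_2^2\}$. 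I would then invoke LaSalle's invariance principle: the set $\{L_2'=0\}$ equals $\{y_2=x_2^2\}\cup\{\widetilde H_2=0\}$, and along $\{y_2=x_2^2\}$ the controller vanishes, so the closed-loop field reduces to $(x_2',y_2')=(0,x_2)$ and $\frac{d}{dt}(y_2-x_2^2)=x_2$, which is nonzero except at the origin. Since $(0,0)\notin\gamma_h$ for $h\in(0,\frac{1}{4})$ and the origin is an unstable equilibrium, every trajectory off its (measure-zero) stable manifold converges to $\gamma_h=\{\widetilde H_2=0\}$, which is invariant as a level set of the conserved $H_2$ and carries an equilibrium-free periodic flow. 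A regular-perturbation argument extends this to small $r_2>0$ exactly as in Proposition \ref{prop:K2}, and a class of larger higher-order terms can be absorbed as in Lemma \ref{prop:K2-r}.

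Finally I would blow down the controller via \eqref{eq:Ks}, recovering \eqref{eq:thmslow_u1}, and verify blow-up compatibility: because the controller enters \eqref{eq:slow-thm} only through the factor $\ve$ and $\lim_{\ve\to0}\ve u=0$ for $h=0$ and $c_2<2$, the layer equation $x'=-y+x^2$, $y'=0$ is left untouched and the fold singularity is preserved. The statement on the convenient gain follows from the chart-$K_1$ boundedness analysis of Lemma \ref{lemma:K1_c2}; the only difference from the fast case is the power $\ve^{-1/2}$ (rather than $\ve^{1/2}$) multiplying the exponential, which contributes an extra factor $\ve_1^{-1}$ in $K_1$ and hence shifts the admissible bound from $c_2\le 2+\frac{3}{2}\frac{\ve}{y}\ln\!\left(K\frac{\ve}{y}\right)$ to $c_2\le 2+\frac{5}{2}\frac{\ve}{y}\ln\!\left(K\frac{\ve}{y}\right)$. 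The main obstacle I anticipate is the LaSalle step: one must confirm that the closed-loop restriction to the critical parabola $\{y_2=x_2^2\}$ is genuinely non-invariant and that no spurious stable equilibrium survives on $\gamma_h$ itself, the analogue of the $\{x_2=0\}$ discussion in Section \ref{sec:foldK2}, which is slightly more delicate here since the ``bad set'' is curved and coincides with the critical manifold.
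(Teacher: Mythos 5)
Your proposal is correct and follows exactly the route the paper intends: the paper omits the proof of Theorem \ref{thm:slow}, saying only that the analysis is similar to Section \ref{sec:fast}, and your reconstruction --- the cancellation of $\alpha$ in the slow equation (so no $\hat x$-shift is needed), the Lyapunov function $L_2=\frac{1}{2}\widetilde{H}_2^2$ in chart $K_2$ yielding $L_2'=-c_1(y_2-x_2^2)^2\exp((c_2-2)y_2)\widetilde{H}_2^2\leq 0$, LaSalle on $\left\{y_2=x_2^2\right\}\cup\left\{\widetilde{H}_2=0\right\}$ with the non-invariance of the parabola away from the unstable origin, regular perturbation in $r_2$, blow-down, and the chart-$K_1$ power count explaining the shift from $\tfrac{3}{2}$ to $\tfrac{5}{2}$ --- is the correct slow-actuation analogue of Proposition \ref{prop:K2}, Lemma \ref{prop:K2-r}, and Lemma \ref{lemma:K1_c2}. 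The individual computations (the partial derivatives of $H_2$, the sign of $L_2'$, the instability of the origin for $h<\tfrac{1}{4}$, and the triviality of blow-up compatibility since the fast equation is unactuated) all check out.
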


In Figure \ref{fig:fold-slow} we illustrate the statement of Theorem \ref{thm:slow}.
\newpage

\begin{figure}[htbp]\centering
	\begin{tikzpicture}
		\node at (0,0){
		\begin{tikzpicture}
			\node at (0,0){
				\includegraphics{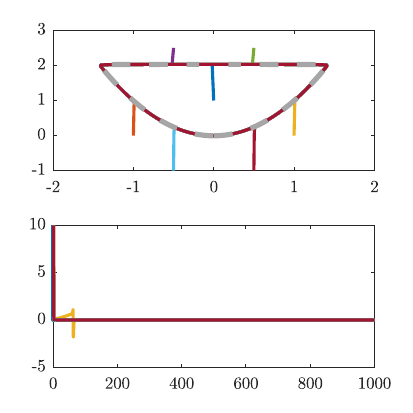}  
			};
			\node at (0,-3.5){\small$t$};
			\node at (0,0){\small$x$};
			\node at (-3.25,1.5){\small$y$};
			\node at (-3.25,-1.5){\small$u$};
		\end{tikzpicture}
		};
		\node at (7,0){
		\begin{tikzpicture}
			\node at (0,0){
				\includegraphics{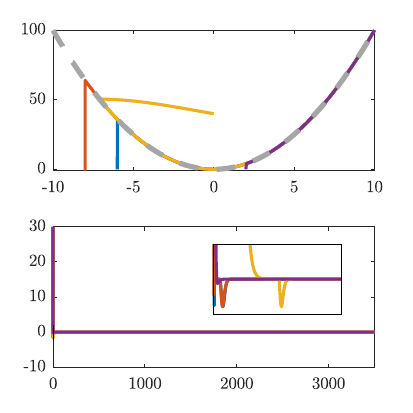}
			};
			\node at (0,-3.5){\small$t$};
			\node at (0,0){\small$x$};
			\node at (-3.25,1.5){\small$y$};
			\node at (-3.25,-1.5){\small$u$};
		\end{tikzpicture}
		};
	\end{tikzpicture}
	\caption{ The first column corresponds to the control of a bounded canard cycle (shown in dashed-gray), while the second column to the control of the maximal canard. The first row shows the phase-portrait in $(x,y)$-coordinates. The second row shows the time series of the corresponding controller. We show on the lower-right diagram a detail of the controller's signal for time close to $0$. }
	\label{fig:fold-slow}
\end{figure}


\section{Controlling Canard Cycles for the van der Pol oscillator}\label{sec:vdp}

In this section we are going to extend the ideas developed previously to control canard cycles in the van der Pol oscillator. \ch{The main idea is to adapt and extend the controller proposed in Theorem \ref{thm:fold-fast}, and to use it to control canard cycles of the van der Pol oscillator. In this context we distinguish two types of canard cycles: a) \emph{canards with head} and b) \emph{canards without head}. Canards with head refer to canard cycles with two fast segments, while canards without head have only one fast segment, see Figure \ref{fig:sims}}. Furthermore, due to its relationship with some neuron models, like the Fitzhugh-Nagumo model \cite{fitzhugh1969mathematical,nagumo1962active}, we shall consider that the controller acts on the fast variable only. The idea is that the controller represents input current. Thus, let us study
\begin{equation}\label{eq:vdp}
	\begin{split}
		x'&=-y+x^2-\frac{1}{3}x^3+u\\
		y'&=\ve x.
	\end{split}
\end{equation}
\begin{remark}
	For simplicity, we have chosen to present the case $\alpha=0$. However, the case $\alpha\neq0$ follows straightforwardly from considering the arguments at the beginning of Section \ref{sec:fast}.
\end{remark}

\ch{The corresponding critical manifold reads as,
\begin{equation}
	\cS_0 = \left\{ (x,y)\in\R^2\, : \, y=x^2-\frac{1}{3}x^3\right\}.
\end{equation}
The repelling and attracting parts of $\cS_0$ are denoted respectively by $\cS_0^\txtr$ and $\cS_0^\txta$, and are given by
\begin{equation}
	\begin{split}
		\cS_0^\txtr &= \left\{ (x,y)\in\cS_0\, : \, 0<x<2\right\}\\
		\cS_0^\txta &= \left\{ (x,y)\in\cS_0\, : \, x<0\right\}\cup\left\{ (x,y)\in\cS_0\, : \, x>2\right\}.
	\end{split}
\end{equation}
}

Furthermore, system \eqref{eq:vdp} has two fold points, one at the origin and one at $(x,y)=(2,\frac{4}{3})$. In fact, the origin is a canard point and the singular limit of \eqref{eq:vdp} is as shown in Figure \ref{fig:vdp1}.

To state our main result, \ch{let $\cN_1\subset\R^2$ be a region containing a subset of the repelling critical manifold $\cS_0^\txtr$ and $\cN_2\subset\R^2$ a small region containing a subset of $\cS_0$ around the origin. Although it is not necessary to be precise on such regions, since several choices are possible, an example of $\cN_1$ and $\cN_2$ is as follows}
\begin{equation}\label{eq:Ns}
	\begin{split}
		\cN_1 &= \left\{ (x,y)\in\R^2 \, : \, |-y+x^2-\frac{1}{3}x^3|<\beta_1,\, 0<x<2,\, y_{\txtmin}<y<y_{h} \right\}\\
		\cN_2 &= \left\{ (x,y)\in\R^2 \, : \, |-y+x^2|<\beta_2, \, -x_\txtmin<x<x_{\txtmax} \right\},
	\end{split}
\end{equation}
where \ch{the defining positive constants are such that $\cN_1$ and $\cN_2$ have a non-empty intersection in the first quadrant, and} $0<y_{\txtmin}\in\cO(\ve)$ and $y_{\txtmin}<y_h<\frac{4}{3}$. \ch{The precise meaning of these bounds is given in sections \ref{sec:K1vdp} and \ref{sec:composite}, and is already sketched in Figure \ref{fig:vdp1}.}

\begin{proposition}\label{thm:canardsvdp}
	Consider \eqref{eq:vdp}, let $\psi_i$ be a bump function with support $\cN_i$, and let the repelling slow manifold $\cS_\ve^\txtr$ be given by the graph of $x=\phi(y,\ve)$. Then, one can choose $\cN_i$,  positive constants $c_1$ and $k_1$, and a small constant $x^*$, $|x^*|\ll1$, such that the controller
	\begin{equation}
		\begin{split}
			u &= \frac{1}{2}u_1\psi_1 + \frac{1}{2}u_2\psi_2,
		\end{split}
	\end{equation}
	where 
	\begin{equation}
		\begin{split}
			u_1 &= - F_0 - F_{x^*} + v_1\\
			u_2 &= c_1x\ve^{-1/2}\left(y-x^2+\frac{\ve}{2}\right),
		\end{split}
	\end{equation}
	and with
	\begin{equation}
		\begin{split}
			F_{x^*}(x,y,\ve) &= -y + (x-x^*\sqrt{y})^2 - \frac{(x-x^*\sqrt{y})^2\ve}{2y} - \frac{1}{3}(x-x^*\sqrt{y})^3\\
			v_1(y,\ve) &= \frac{ 2\phi +x^*\sqrt{y} }{\phi}\left( -y + \phi^2-\frac{\ve}{2y}\phi^2-\frac{1}{3}\phi^3 \right) - \left( \frac{\ve}{y}\phi+\sqrt{y}\phi^2+k_1\sqrt{y}\right)(x-\phi-x^*\sqrt{y}),
		\end{split}
	\end{equation}
	stabilizes a canard cycle with height $y_h$. Moreover, if $x^*<0$ then the canard is without head, while if $x^*>0$ then the canard is with head.
\end{proposition}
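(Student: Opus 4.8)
The plan is to analyze the closed-loop system region by region, using the two bump functions to assign a separate task to each controller. First I would identify $u_2$ with the fast maximal-canard controller of Theorem~\ref{thm:fold-fast}. Indeed, setting $\alpha=0$, $h=0$ and $c_2=2$ in \eqref{eq:thm_u1} collapses the exponential factor against the one inside $\hat H$ from \eqref{eq:H}, and a short computation gives $\hat u=\frac{c_1}{2}x\ve^{-1/2}\left(y-x^2+\frac{\ve}{2}\right)=\frac{1}{2}u_2$. Hence on the core of $\cN_2$, where $\psi_2=1$ and $\psi_1=0$, the closed loop is exactly the one already treated in Section~\ref{sec:fast}, so Theorem~\ref{thm:fold-fast} guarantees that near the canard point the trajectory is steered onto the maximal canard and crosses the fold from $\cS_\ve^\txta$ to $\cS_\ve^\txtr$. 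It then remains to control the ascent along the repelling branch and the return jump, which is the role of $u_1$ on $\cN_1$.

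Next I would read $u_1$ as a path-following (tracking) controller for the \emph{shifted} repelling slow manifold $x=\phi(y,\ve)+x^*\sqrt{y}$. Writing $F_0=F_{x^*}|_{x^*=0}$, the pair $-F_0-F_{x^*}$ is a feedback-linearizing block that removes the open-loop fast field $-y+x^2-\frac{1}{3}x^3$ (note $F_0$ is exactly this field minus the $\cO(\ve)$ slow-manifold correction $\frac{x^2\ve}{2y}$) and replaces it by the target field $-F_{x^*}$, whose zero set is precisely the shifted manifold. Introducing the tracking error $e=x-\phi(y,\ve)-x^*\sqrt{y}$, the feedforward part of $v_1$ (the term with the factor $\frac{2\phi+x^*\sqrt{y}}{\phi}$ together with $\frac{\ve}{y}\phi+\sqrt{y}\phi^2$) is designed to cancel the drift of $\phi+x^*\sqrt{y}$ produced by $y'=\ve x$, while the last term of $v_1$ supplies a linear feedback of gain proportional to $k_1$. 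The resulting error dynamics have the form $e'=-k_1\sqrt{y}\,e+\cO(e^2)$ (up to the constant $\psi_1$- and $\frac{1}{2}$-weights on the core of $\cN_1$). Since $\cN_1$ lies in the first quadrant with $\phi>0$ bounded away from $0$ — which also legitimizes the division by $\phi$ in $v_1$ — and with $y\ge y_{\txtmin}>0$, the gain $k_1\sqrt{y}$ is positive and bounded below, so the Lyapunov function $\frac{1}{2}e^2$ shows $e\to0$: trajectories converge to the shifted manifold and climb it until $y$ reaches $y_h$, where $\psi_1$ switches off.

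I would then glue the pieces into one orbit and estimate the return map. The cycle is the concatenation of (i) Fenichel descent along $\cS_\ve^\txta$, (ii) the controlled canard passage through the fold on $\cN_2$, (iii) the controlled ascent of the (shifted) repelling branch on $\cN_1$ up to height $y_h$, and (iv) a fast horizontal jump along a layer fibre once both bump functions vanish. The sign of $x^*$ selects the landing: for $x^*<0$ the orbit departs the repelling branch to the \emph{left} of the right fold at $(2,\frac{4}{3})$ and returns to the left attracting branch, producing a canard \emph{without head}; for $x^*>0$ it passes to the right of that fold and descends the outer branch, producing a canard \emph{with head}. Asymptotic stability of the cycle follows by composing the contraction estimates of the four segments — exponential attraction of $\cS_\ve^\txta$ (Fenichel), the stability from Theorem~\ref{thm:fold-fast} on $\cN_2$, the error contraction $e'\approx-k_1\sqrt{y}\,e$ on $\cN_1$, and the strong transverse contraction of the fast jumps — so that the induced Poincar\'e map is a contraction.

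The hard part will be the gluing itself. One must choose $\cN_1$, $\cN_2$ and the bump functions so that the orbit enters and leaves each region transversally and in the correct order, and so that in the overlap $\cN_1\cap\cN_2$ (nonempty in the first quadrant, which is the reason for the $\frac{1}{2}$-weighting of the two controllers) the blended input $\frac{1}{2}u_1\psi_1+\frac{1}{2}u_2\psi_2$ does not spoil the canard-following property established separately on each core. Making the composed return-map estimate rigorous — in particular matching the exponentially thin passage near the fold with the switch-on of $u_1$, and keeping all $\ve$-dependent terms under control — is where the delicate bookkeeping lies; by contrast the individual Lyapunov arguments on $\cN_1$ and $\cN_2$ are comparatively routine.
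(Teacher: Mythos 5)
Your proposal is correct and follows essentially the same route as the paper: $u_2$ is the Theorem~\ref{thm:fold-fast} maximal-canard controller (with $\alpha=0$, $h=0$, $c_2=2$) acting near the fold, $u_1$ is the reverse-and-shift plus variational-feedback controller that renders the shifted repelling slow manifold $x=\phi+x^*\sqrt{y}$ invariant and exponentially attracting (the paper builds this in the chart $K_1$ via Proposition~\ref{prop:vdpK1} and blows it down in Lemma~\ref{lemm:u1-vdp}, whereas you verify the same structure directly in $(x,y)$-coordinates), and the bump-function gluing with the sign of $x^*$ selecting the jump direction is exactly the paper's composite-controller argument of Section~\ref{sec:composite}. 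You also correctly identify the gluing/overlap estimates as the delicate part, which the paper likewise only sketches.
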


\begin{proof}[Sketch of proof:] As before, all the analysis is carried-out in the blow-up space.  The overall idea is as follows: the controller to be designed acts only within a small neighbourhood of $\left\{ 0 \right\}\cup\cS_\ve^\txtr$, mainly because the rest of the slow manifold is already stable, so there is no need of stabilizing it. The desired height of the canard is regulated by the constant $y_h$. \ch{The controller $u_2$ controls the trajectories near the canard point and therefore is given by Theorem \ref{thm:fold-fast}, where we have made the choice $h=0$ and $c_2=2$.} So, the new analysis is performed in the chart $K_1=\left\{ \bar y=1 \right\}$, where the objective is to stabilize the (normally hyperbolic) repelling branch of the slow manifold $\cS_\ve^\txtr$ \ch{resulting in the controller $u_1$. Later, in section \ref{sec:composite}, we combine the two controllers and justify the form of the controller given in the Proposition}. The most important feature \ch{of $u_1$} is to control the location of the orbits relative to $\cS_\ve^\txtr$ as it is precisely such location that determines the direction of the jump once the orbits reach the desired height. To avoid smoothness issues, the regions where the controllers are active are defined via bump functions. A schematic representation of this idea is provided in Figure \ref{fig:vdp1}, while the details of the proof follows from Sections \ref{sec:K1vdp} and \ref{sec:composite}. 
\end{proof}

\begin{figure}[htbp]\centering
	\begin{tikzpicture}
		\node at (0,0){
			\includegraphics[scale=1.5]{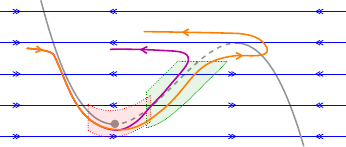}
		};
		\node at (.7,.6) {\color{gray}{\small$\cS_0^\txtr$}};
	\end{tikzpicture}
	\caption{Strategy for the control design: first, within a small neighborhood of the canard point (red-shaded region), we use the controller designed in Section \ref{sec:fold}. Afterwards, a second controller is designed in chart $K_1$ and whose task is to stabilize the (normally hyperbolic) repelling branch $\cS_\ve^\txtr$. This second controller is active on a neighborhood of $\cS_0^\txtr$ (green-shaded region). Furthermore, it is via such controller that we steer the orbits towards either side of $\cS_\ve^\txtr$. This induces that the trajectories jump towards the desired direction once the second controller is inactive. The two orbits illustrate the aforementioned strategy.}
	\label{fig:vdp1}
\end{figure}

In Figure \ref{fig:sims} we show some simulations using the proposed controller.

\begin{figure}[htbp]\centering
	
	\begin{tikzpicture}
		\node (noHead) at (-3.85,0){
			\includegraphics[scale=1]{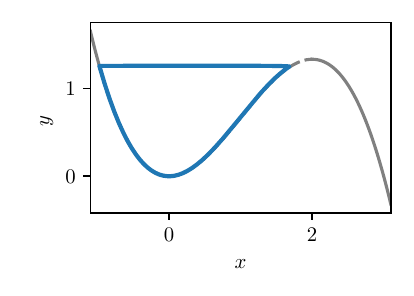}
		};
		\node (withHead) at (3.85,0){
			\includegraphics[scale=1]{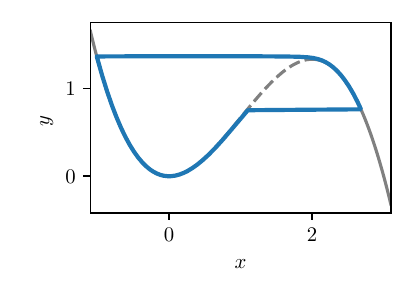}
		};
		\node[text width=5.5cm, align=justify] at (-4,-3){
		(a) A stable canard without head. For this simulation we have set $y_h=1.25$ and $x^*=-0.01$.
		};
		\node[text width=5.5cm, align=justify] at ( 4,-3){
		(b) A stable canard with head. For this simulation we have set $y_h=0.75$ and $x^*=0.01$.
		};
	\end{tikzpicture}

	\caption{Numerical simulations illustrating the controller of Proposition \ref{thm:canardsvdp}. In all these simulations we have used $\ve=0.01$ and $(c_1,k_1)=(1,1)$.}
	\label{fig:sims}
\end{figure}

Before proceeding with the proof of Proposition \ref{thm:canardsvdp}, let us point out that it is straightforward to use the proposed controller to produce robust mixed mode oscillations (MMOs) \cite{desroches2012mixed}. One way to do this is as follows: first of all, we assume that we are able to count the number of small amplitude oscillations (SAOs) and of large amplitude oscillations (LAOs). Next, let us say that we start by following a canard without head, so we set the controller constant $x^*<0$ and $y_h$ to the desired height. After the number of desired SAOs has been reached, we change the controller constant $x^*$ to $x^*>0$ and, if desired, $y_h$ to a new height value. So, the controller will now steer the system to follow a canard with head. This process can be repeated to produce any other pattern allowed by the geometry of the van der Pol oscillator. We show in Figure \ref{fig:vdp-mmos} an example of stable MMOs that are obtained using the controller of Proposition \ref{thm:canardsvdp}.
\begin{figure}[htbp]\centering
	\begin{tikzpicture}
		\node at (-4,0){
			\includegraphics[scale=1]{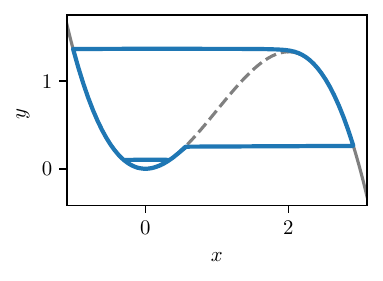}
		};
		\node at (4,2){
			\includegraphics[scale=1]{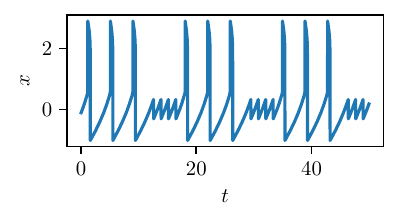}
		};
		\node at (4,-2){
			\includegraphics[scale=1]{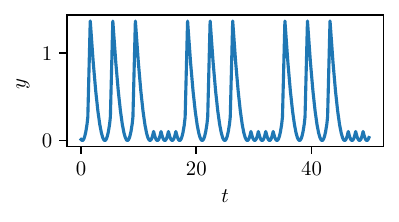}
		};
	\end{tikzpicture}
	\caption{A sample of a Mixed-Mode Oscillation (MMO) with 3 Large Amplitude Oscillations (LAOs) and 4 Small Amplitude Oscillations (SAOs) obtained using the controller of Proposition \ref{thm:canardsvdp}.}
	\label{fig:vdp-mmos}
\end{figure}

\subsection{Analysis in the directional chart \texorpdfstring{$K_1$}{K1}}\label{sec:K1vdp}

\renewcommand{\bx}{x_1}
\renewcommand{\by}{y_1}
\renewcommand{\bz}{z_1}
\renewcommand{\br}{r_1}
\renewcommand{\bu}{\mu_1}
\renewcommand{\bv}{\nu_1}
\renewcommand{\be}{\ve_1}
\renewcommand{\ba}{\alpha_1}
\renewcommand{\bg}{g_1}
\renewcommand{\bphi}{\phi_1}

Similar to the analysis in section \ref{sec:K1} we use a directional blow-up defined by
\begin{equation}\label{eq:vdpK1blowup}
	x=\br\bx, \quad y=\br^2, \quad\quad \ve=\br^2\be, \quad u=\br^2\bu, \quad \alpha=\br\ba.
\end{equation}
Therefore, the local vector field associated to \eqref{eq:vdp} reads as
\begin{equation}\label{eq:K1vdp}
	\begin{split}
		\br' &= \frac{1}{2}\br\be\bx\\
		\be' &= -\be^2\bx\\
		\bx' &= -1+\bx^2-\frac{1}{2}\bx^2\be-\frac{1}{3}\br\bx^3+\bu.
	\end{split}
\end{equation}
To have a better idea of what we are going to achieve with the controller, it is worth to first look at the uncontrolled dynamics. 

Let us define a domain
\begin{equation}
	D_1 = \left\{ (\br,\be,\bx)\in\R^3\, |\,  0\leq\br\leq\rho_1, \; 0\leq\be\leq\delta_1, \; \bx\in\R \right\}.
\end{equation}

\begin{lemma}\label{lemm:K1-dynamics} Consider \eqref{eq:K1vdp} with $\bu=0$. Then, one can choose constants $\rho_1>0$ and $\delta_1>0$ such that the following properties hold within the domain $D_1$.
\begin{enumerate}[leftmargin=*]
	\item There exist semi-hyperbolic equilibrium points $p_{1,\pm}=(\br,\be,\bx)=(0,0,\pm1)$. The point $p_{1,-}$ is attracting while $p_{1,+}$ is repelling along the $\bx$-axis.
	\item Let $\cM_1$ be defined by
	\begin{equation}
		\cM_1=\left\{ (\br,\be,\bx)\in\R^3\;|\; \be=0,\, \br=3\left(\frac{1}{\bx}-\frac{1}{\bx^3}\right) \right\}.
	\end{equation}
	The set $\cM_1$ corresponds to the set of equilibrium points of \eqref{eq:K1vdp} restricted to $\left\{ \be=0 \right\}$. Moreover, let us denote the subsets
	\begin{equation}
		\begin{split}
			\cM_{1,-} &= \left\{ (\br,\be,\bx)\in\cM_1 \,|\, \bx<0 \right\},\\
			\cM_{1,+} &= \left\{ (\br,\be,\bx)\in\cM_1 \,|\, \bx\geq1\right\}.\\
		\end{split}
	\end{equation}
	The subset $\cM_{1,-}$ is attracting and the subset $\cM_{1,+}$ can be partitioned as $\cM_{1,+}=\cM_{1,+}^\txtr\cup\left\{ \left(\frac{3-\sqrt{3}}{\sqrt{3}},0,\sqrt{3}\right)\right\}\cup\cM_{1,+}^\txta$ where
	\begin{equation}
	\begin{split}
		\cM_{1,+}^\txtr &= \left\{ (\br,\be,\bx)\in\cM_{1,+} \,|\, 1\leq\bx<\sqrt{3} \right\},\\
		\cM_{1,+}^\txta &= \left\{ (\br,\be,\bx)\in\cM_{1,+} \,|\, \bx>\sqrt{3} \right\}
	\end{split}
	\end{equation}
	are the repelling and attracting branches of $\cM_{1,+}$, respectively. 

	\item Restricted to $\left\{ \br=0 \right\}$ there exist $1$-dimensional local center manifolds $\cE_{1,-}$ and $\cE_{1,+}$ located, respectively, at the points $p_{1,-}$ and $p_{1,+}$. Such manifolds are given by
	\begin{equation}\label{eq:h1}
		\begin{split}
			\cE_{1,\pm} = \left\{ (\br,\be,\bx)\in\R^3\;|\; \br=0,\, \bx=h_{1,\pm}(\be) \right\},
		\end{split}
	\end{equation}
	where
	\begin{equation}
		h_{1,\pm}(\be)=\pm\left(1+\frac{\be}{2}\right)^{1/2}.
	\end{equation}
	The flow along $\cE_{1,-}$ is directed away from the point $p_{1,-}$ and the flow along $\cE_{1,+}$ is directed towards the point $p_{1,+}$. Furthermore, the center manifolds $\cE_{1,\pm}$ are unique.
	\item There exist $2$-dimensional local centre manifolds $\cW_{1,\pm}$ that contain, respectively, the point $p_{1,\pm}$, the branch of equilibrium points $\cM_{1,\pm}$ and the centre manifold $\cE_{1,\pm}$. These centre manifolds are unique and, moreover, $\cW_{1,-}$ is attracting and  $\cW_{1,+}$ is repelling.

\end{enumerate}
\end{lemma}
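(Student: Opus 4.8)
The whole statement can be read off the explicit vector field \eqref{eq:K1vdp} with $\bu=0$, the key structural facts being that the coordinate planes $\{\be=0\}$ and $\{\br=0\}$ are invariant (since $\be'$ carries a factor $\be^2$ and $\br'$ a factor $\br\be$). The plan is therefore: (i) locate all equilibria; (ii) linearise along $\cM_1$ and at $p_{1,\pm}$; (iii) analyse the invariant plane $\{\br=0\}$ to produce $\cE_{1,\pm}$; and (iv) invoke the centre manifold theorem for the $2$-dimensional $\cW_{1,\pm}$.

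For (i) and (ii): since $\be'=-\be^2\bx$, any equilibrium with $\bx\neq0$ forces $\be=0$, and then $\bx'=0$ is exactly $-1+\bx^2-\tfrac13\br\bx^3=0$, i.e. the curve $\cM_1$; as $\bx=0$ gives $\bx'=-1\neq0$, there are no further equilibria, and $p_{1,\pm}$ are the $\br=0$ endpoints $\bx=\pm1$ of $\cM_1$. A short computation gives the Jacobian eigenvalues along $\cM_1$ as $0,0,\,(3-\bx^2)/\bx$, the double zero reflecting that $\cM_1$ is a curve of equilibria inside the invariant plane $\{\be=0\}$, and the single nonzero eigenvalue $\partial_{\bx}\bx'=2\bx-\br\bx^2=(3-\bx^2)/\bx$ governing transverse stability. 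Its sign yields the asserted branch structure: attracting for $\bx<0$ and for $\bx>\sqrt3$, repelling for $1\le\bx<\sqrt3$, and a fold where $\bx=\sqrt3$ (there the nonzero eigenvalue vanishes and $\br$ is maximal along $\cM_1$). Specialising to $\bx=\pm1$ gives eigenvalues $\{0,0,\pm2\}$ with the nonzero eigenvector along the $\bx$-axis, which is precisely the semi-hyperbolicity and the ``attracting/repelling along $\bx$'' of item~1. Shrinking $\rho_1,\delta_1$ confines everything to $D_1$ and bounded away from $\bx=0$ (in particular $\cM_{1,-}$ to $-1\le\bx<0$, where it is attracting).

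For (iii), on $\{\br=0\}$ the flow reduces to $\be'=-\be^2\bx$, $\bx'=-1+\bx^2-\tfrac12\be\bx^2$. A direct substitution shows $\bx=h_{1,\pm}(\be)=\pm(1+\be/2)^{1/2}$ is an \emph{exact} invariant curve: along it both $\bx'$ and $h'_{1,\pm}(\be)\,\be'$ reduce to $-\be^2/4$. This curve meets $p_{1,\pm}$ tangent to the centre eigenvector, hence is a centre manifold $\cE_{1,\pm}$; the flow direction follows from $\be'=-\be^2\bx$, positive on $\cE_{1,-}$ (so $\be$ grows and the flow leaves $p_{1,-}$) and negative on $\cE_{1,+}$ (so $\be$ decays and the flow enters $p_{1,+}$). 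For (iv), the centre manifold theorem at $p_{1,\pm}$ (eigenvalues $0,0,\pm2$) provides the $2$-dimensional $\cW_{1,\pm}$ tangent to $\ker$ of the Jacobian; a one-line tangency check (the traces of $\cM_{1,\pm}$ on $\{\be=0\}$ and of $\cE_{1,\pm}$ on $\{\br=0\}$ both solve $\bx_3=\tfrac16\br+\tfrac14\be$ to first order) shows both lie in that kernel, so $\cW_{1,\pm}$ contains them, while the transverse eigenvalue $\pm2$ makes $\cW_{1,-}$ attracting and $\cW_{1,+}$ repelling.

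I expect the only delicate point to be uniqueness of the centre manifolds. I would handle it by the standard flow-direction argument: for a semi-hyperbolic point with one zero and one nonzero eigenvalue, a smooth centre manifold is unique precisely on the side dictated by the sign of the hyperbolic eigenvalue together with the direction of the centre flow (as the model $\dot u=u^2,\ \dot v=\lambda v$ already shows, where the exponentially small corrections $e^{\lambda/u}$ fail to be smooth on exactly one side). Here the pairing is favourable at both points --- hyperbolic eigenvalue $+2$ with incoming centre flow at $p_{1,+}$, and $-2$ with outgoing centre flow at $p_{1,-}$ --- so the correction must vanish identically; the explicit formula $h_{1,\pm}$ then makes uniqueness of $\cE_{1,\pm}$ immediate, and the $2$-dimensional case follows by the same exponential-separation estimate, with the invariant sets $\cM_{1,\pm}$ and $\cE_{1,\pm}$ pinning $\cW_{1,\pm}$ down on the two coordinate slices.
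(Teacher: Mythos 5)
Your proposal is correct and follows essentially the same route as the paper's own (sketched) proof: direct computation of the equilibria and of the eigenvalues $0,0,(3-\bx^2)/\bx$ along $\cM_1$, the explicit curves $h_{1,\pm}$ for $\cE_{1,\pm}$, flow direction from $\be'=-\be^2\bx$, uniqueness via the semi-hyperbolic saddle/flow-direction argument, and centre manifold theory for $\cW_{1,\pm}$. The only cosmetic difference is that you verify invariance of $\bx=h_{1,\pm}(\be)$ by direct substitution, whereas the paper obtains these curves as the zero level set of the first integral $H_1$ on $\{\br=0\}$.
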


\begin{proof}[Sketch of the proof, see \cite{krupa2001relaxation} for details]
	The first two items are obtained by straightforward computations. The expression of the centre manifolds follow from the fact that the restriction of \eqref{sec:K1vdp} to $\left\{ \br=0 \right\}$ has the invariant (just as in the fold case) $H_1 = \frac{1}{2}\exp\left(-2\be^{-1}\right)\left(\be^{-1}-\be^{-1}\bx^2+\frac{1}{2}\right)$. Therefore the functions $h_{1,\pm}$ are given by the solutions of $H_1=0$. The flow on $\cE_{1,\pm}$ follows from the equation $\be'=-\be^2\bx$. The uniqueness of $\cE_{1,\pm}$ is due to the fact that $p_{1,\pm}$ is a semi-hyperbolic saddle of the dynamics of \eqref{eq:K1vdp} restricted to $\left\{\br=0\right\}$. Finally, the existence and properties of $\cW_{1,\pm}$ follow from local analysis at $p_{1,\pm}$, centre manifold theory, the previous arguments, and by choosing $\rho_1<\frac{3-\sqrt{3}}{\sqrt{3}}$. The previous choice of $\rho_1$ is particularly necessary for the stability property of  $\cW_{1,+}$.
\end{proof}

\begin{remark}
	$\cW_{1,+}$ is related, via the blow-up map, to $\cS_\ve^\txtr$. Therefore, the task of the controller is going to be to stabilize the centre manifold $\cW_{1,+}$.
\end{remark}

\begin{remark}
	In what follows, we are going to define some geometric objects, in particular centre manifolds, for the closed-loop dynamics. To make a clear distinction between their open-loop counterparts, and to be able to compare them, we shall denote relevant geometric objects of the closed-loop system by a $\cl$ superscript. 
\end{remark}

In this section we are going to be interested only in $(\bx,\be)\in\R^2_+$. So, to simplify notation let
\newcommand{\hcl}{h_1^{\cl}}
\begin{equation}
	\hcl(\be)=h_{1,+}(\be)=\left( 1+\frac{\be}{2}\right)^{1/2}.
\end{equation}
Furthermore, let us assume that the centre manifold $\cW_{1,+}$ (recall Lemma \ref{lemm:K1-dynamics}) is given by the graph of
\begin{equation}
	\bx=\phi_1(\br,\be).
\end{equation}
	
Note that $\phi_1(0,\be)=\hcl(\be)$. Therefore, one can in fact write
\begin{equation}\label{eq:phi1}
	\phi_1(\br,\be)=\hcl(\be)+\sum_{\substack{i\geq1\\j\geq0}}\sigma_{ij}\br^i\be^j,
\end{equation}
for some coefficients $\sigma_{ij}\in\R$. We now proceed with the following steps.

\begin{enumerate}[leftmargin=*]
	\item \textit{Reverse the direction of the flow in the $\bx$-direction:} Define $f_1(\br,\be,\bx)=-1+\bx^2-\frac{1}{2}\bx^2\be-\frac{1}{3}\br\bx^3$ and let $\bu=-f_1(\br,\be,\bx)-f_1(\br,\be,\bx-\bx^*)+\bv$, where $\bx^*\sim0$ is a constant (the usefulness of $\bx^*$ will become evident below) and $\bv=\bv(\br,\be,\bx)$ is to be further designed. With this step we have that \eqref{eq:K1vdp} now reads as
	\begin{equation}\label{eq:K1vdp2}
		\begin{split}
			\br' &= \frac{1}{2}\br\be\bx\\
			\be' &= -\be^2\bx\\
			\bx' &= -f_1(\br,\be,\bx-\bx^*)+\bv.
		\end{split}
	\end{equation}

	\item \textit{Design $\bv$ so that \eqref{eq:K1vdp2} has $\cW_1^\cl\coloneqq \left\{ (\br,\be,\br)\in\R^3\,|\,\bx=\bx^*+\phi_1(\br,\be)\right\}$ as a closed-loop centre manifold}: this step requires standard centre manifold computations. By performing them we find that
	\begin{equation}
	\begin{split}
		\bv &= \frac{2\phi_1+\bx^*}{\phi_1}\left( -1+\phi_1^2-\frac{1}{2}\phi_1^2\be-\frac{1}{3}\br\phi_1^3 \right).
	\end{split}
	 \end{equation} 

	Note that, if we restrict to $\left\{ \br=0 \right\}$, \eqref{eq:K1vdp2} now reads as
	\begin{equation}\label{eq:K1vdp3}
		\begin{split}
			\be' &= -\be^2\bx\\
			\bx' &= 1-(\bx-\bx^*)^2 + \frac{1}{2}(\bx-\bx^*)^2\be + \be^2\frac{2\hcl+\bx^*}{4\hcl}.
		\end{split}
	\end{equation}

	We know that \eqref{eq:K1vdp3} has a centre manifold $\cE_1^\cl\coloneqq \left\{ (\be,\bx)\in\R^2_{\geq0}\,|\,\bx=\bx^*+\hcl(\be)\right\}$. Furthermore, it follows from straightforward computations that the equilibrium point $p_1^*\coloneqq (0,0,1+\bx^*)$ is attracting along the $\bx$-axis. This means that $\cE_1^\cl$, and also $\cW_1^\cl$, are locally (near $p_1^*$) attracting. Next we improve such stability.

	\item \textit{Design a variational controller that renders $\cW_1^\cl$ locally exponentially stable:} For this it is enough to take the $\bx$-component of the variational equation. So, let $z_1=\bx-\phi_1-\bx^*$. The corresponding variational equation along $\cW_1^\cl$ is
	\begin{equation}\label{eq:K1var}
		\begin{split}
			z_1' &= (-2+\be+\br\phi_1)\phi_1 z_1.
		\end{split}
	\end{equation}

	Recall from \eqref{eq:phi1} that $\phi_1>0$ for $\br\geq0$ sufficiently small. Then, we propose to introduce in \eqref{eq:K1var} a variational controller $w_1(\be,z_1)$ of the form
	\begin{equation}
		w_1=-(\be\phi_1+\br\phi_1^2+k_1)z_1,
	\end{equation}
	where $k_1\geq0$. With $w_1$ as above, the closed-loop variational equation becomes
	\begin{equation}
		z_1' = -\left(2\phi_1 + k_1 \right)z_1,
	\end{equation}
	and we readily see that, for $\br\geq0$ sufficiently small, $z_1\to0$ exponentially as $t_1\to\infty$ (where by $t_1$ we denote the time-parameter in this chart). We also notice that the constant $k_1$ helps to improve the contraction rate towards $\cW_1^\cl$.
	Moreover, since $w_1$ vanishes along $\cW_1^\cl$, the variational controller does not change the closed-loop centre manifold $\cW_1^\cl$. 
	Finally, observe that the role of the small constant $\bx^*$ is to shift the position of $\cW_{1}^\cl$ relative to its open-loop counterpart $\cW_{1,+}$. This is important in order to tune the direction along which the trajectory jumps once the controller is inactive.

	\item \textit{Restrict next to $\left\{ \be=0\right\}$:} Note that $\bv(\br,0,\bx)=0$, then we have
	\begin{equation}\label{eq:K1vdpr0}
		\begin{split}
			\br' &= 0\\
			\be' &= 0\\
			\bx' &= -f_1(\br,0,\bx-\bx^*).
		\end{split}
	\end{equation}

	Similar to the previous step, the new line of equilibrium points is slightly shifted according to $\bx^*$. In fact, the relevant set of stable equilibrium points of \eqref{eq:K1vdpr0} is given as
	\begin{equation}
		\cM_1^\cl=3\left\{ (\br,\be,\bx)\in\R^3\;|\; \be=0,\, \br=3\left(\frac{1}{\bx-\bx^*}-\frac{1}{(\bx-\bx^*)^3}\right),\, \br<\frac{2}{\sqrt{3}} \right\}.
	\end{equation}
	Linearization of \eqref{eq:K1vdpr0} along $\cM_1^\cl$ shows that $\cM_1^\cl$ is exponentially attracting in the $\bx$-direction. Therefore, we can conclude that $\cW_1^\cl$ is located at $\bx=1+\bx^*$, and that it contains the exponentially attracting centre manifold $\cE_1^\cl$ and the curve of exponentially attracting equilibrium points $\cM_1^\cl$.

	\item Note that the flow along the centre manifold $\cW_1^\cl$ has not changed and is given, up to smooth equivalence and away from its corner at $\bx=1+\bx^*$, by
	\begin{equation}
		\begin{split}
			\br' &= \frac{1}{2}\br\\
			\be' &= -\be.
		\end{split}
	\end{equation}
	In other words, the flow along $\cW_1^\cl$ is of saddle type.

\end{enumerate}

With the previous steps we can now prove the main result of this section. For this, let us define the domain $D_1^+=D_1|_{\bx\geq0}$ and the sections
\begin{equation}
	\begin{split}
		\Sigma_1^\txten &= \left\{ (\br,\be,\bx)\in D_1^+\, | \, \be=\delta_1 \right\},\\
		\Sigma_1^\txtex &= \left\{ (\br,\be,\bx)\in D_1^+\, | \, \br=\rho_1, \, 0<\rho_1<\frac{2}{\sqrt{3}} \right\}.
	\end{split}
\end{equation}
	Also, define a small rectangle
\begin{equation}
	R_1=\left\{ (\br,\be,\bx)\in\Sigma_1^\txten\,|\, |\bx-\hcl-\bx^*|\leq\sigma_1, \, \br\leq\tilde\rho_1<\rho_1 \right\},
\end{equation}
\begin{proposition}\label{prop:vdpK1} Consider \eqref{eq:K1vdp} with the controller 
\begin{equation}\label{eq:vdp-u1}
	\bu =-f_1(\br,\be,\bx)-f_1(\br,\be,\bx-\bx^*)+\bv(\be,\bx),
\end{equation}
where 
\begin{equation}
	\begin{split}
		f_1(\br,\be,\bx) &= -1+\bx^2-\frac{1}{2}\bx^2\be-\frac{1}{3}\br\bx^3,\\
		\bv(\be,\bx) &=\frac{2\phi_1+\bx^*}{\phi_1}\left( -1+\phi_1^2-\frac{1}{2}\phi_1^2\be-\frac{1}{3}\br\phi_1^3 \right)-(\be\phi_1+\br\phi_1^2+k_1)(\bx-\bx^*-\phi_1),\\
	\end{split}
\end{equation}
and the function $\phi_1(\br,\be)$ is defined by the open-loop centre manifold $\cW_{1,+}$. Then, one can choose sufficiently small constants $(\delta_1,\rho_1,\sigma_1,\tilde\rho_1)$ such that the following hold for the closed-loop system.

\begin{enumerate}[leftmargin=*]
	\item $D_1^+$ is forward invariant under the flow of \eqref{eq:K1vdp}.
	\item The centre manifold $\cW_1^\cl$ is locally exponentially attracting for $\br\geq0$ sufficiently small, $\be\geq0$ sufficiently small and for $\br^2\be\geq0$ sufficiently small.
	\item If $\bx^*=0$, the centre manifolds $\cW_1^\cl$ and $\cW_{1,+}$ coincide. On the other hand, if $\bx^*<0$ (resp. if $\bx^*>0$) then $\cW_1^\cl$ is located ``to the left'' (resp. ``to the right'') of $\cW_{1,+}$ in the $\bx$-direction.
	\item The image of $R_1$ under the flow of \eqref{eq:K1vdp} is a wedge-like region at $\Sigma_1^\txtex\cap \cM_1^\cl$.
\end{enumerate}
\end{proposition}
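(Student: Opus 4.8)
The plan is to assemble the four claims from the construction carried out in Steps 1--6 above, noting that substituting \eqref{eq:vdp-u1} into \eqref{eq:K1vdp} yields exactly the closed-loop system \eqref{eq:K1vdp2} with $\bv$ as designed. Claim (3) is then immediate from this construction: by design $\cW_1^\cl=\{\bx=\bx^*+\phi_1(\br,\be)\}$ whereas $\cW_{1,+}=\{\bx=\phi_1(\br,\be)\}$, so the two manifolds differ only by the rigid shift $\bx^*$ in the $\bx$-direction, giving coincidence for $\bx^*=0$ and the stated displacement to the left for $\bx^*<0$ (resp. to the right for $\bx^*>0$).

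For Claim (1) I would check the vector field on the faces of $D_1^+$. The planes $\{\br=0\}$ and $\{\be=0\}$ are invariant, since $\br'$ and $\be'$ carry a factor of $\br$ and $\be$ respectively. On $\{\be=\delta_1\}$ one has $\be'=-\delta_1^2\bx\le0$, so the flow points inward; on $\{\bx=0\}$ a direct evaluation gives $\bx'=-f_1(\br,\be,-\bx^*)+\bv|_{\bx=0}$, which is positive for $|\bx^*|$, $\br$ and $\be$ small (the leading contribution being $1+k_1$), so the flow again points inward; and for large $\bx$ the dominant term $-f_1\sim-\bx^2$ pushes $\bx$ back down, bounding the $\bx$-extent. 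The only face through which trajectories leave is the designated exit section $\Sigma_1^\txtex=\{\br=\rho_1\}$, where $\br'=\frac{1}{2}\rho_1\be\bx\ge0$; thus $D_1^+$ is forward invariant up to this exit section, which is precisely the sense required for Claim (4).

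Claim (2) follows from the variational computation of Step 3. Writing $z_1=\bx-\phi_1-\bx^*$, the designed controller reduces the transverse variational equation along $\cW_1^\cl$ to $z_1'=-(2\phi_1+k_1)z_1$. Since $\phi_1(0,\be)=\hcl(\be)=(1+\be/2)^{1/2}\ge1$ and $\phi_1$ is continuous, we have $2\phi_1+k_1>0$ for $\br\ge0$ small, so $z_1\to0$ exponentially. Invoking the existence and uniqueness of the centre manifolds from Lemma \ref{lemm:K1-dynamics} together with the reduction principle, the exponential contraction in the single transverse direction (against the slow saddle flow in the centre directions) yields local exponential attraction of $\cW_1^\cl$, valid while $\br$, $\be$ and $\ve=\br^2\be$ are small enough to keep both the variational computation and the centre manifold reduction valid.

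The hard part will be Claim (4), the transition-map estimate. Here I would track $R_1\subset\Sigma_1^\txten$ forward under the flow, combining two ingredients: the exponential contraction onto $\cW_1^\cl$ from Claim (2), and the saddle-type flow along $\cW_1^\cl$ established in Steps 5--6, where $\br$ grows like $e^{t_1/2}$ and $\be$ decays like $e^{-t_1}$. As $\be\to0$ the trajectories accumulate on the attracting equilibrium branch $\cM_1^\cl\subset\{\be=0\}$ and are carried to $\br=\rho_1$, so their images limit onto $\Sigma_1^\txtex\cap\cM_1^\cl$, while the transverse contraction simultaneously collapses the $\bx$-width of $R_1$ exponentially in the transition time, producing the wedge-like image. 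The delicate points are the non-hyperbolic corner of $\cW_1^\cl$ at $\bx=1+\bx^*$ and making the wedge shape quantitative; I would handle these by the same blow-up and chart estimates as in the fold analysis of \cite{krupa2001relaxation}, to which this passage is directly analogous, possibly invoking an exchange-lemma argument to control the passage near the saddle centre manifold.
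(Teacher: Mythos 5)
Your proposal is correct and follows essentially the same route as the paper: the paper's own proof simply states that the result "follows directly from our previous analysis" (the five-step construction of $\bv$, the variational controller, and the restriction to $\{\br=0\}$ and $\{\be=0\}$), and your write-up is a faithful, somewhat more detailed assembly of exactly those ingredients, including the appeal to the Krupa--Szmolyan-type saddle transition for the wedge estimate in item (4). The only cosmetic remark is that the bound on large $\bx$ via $-f_1\sim-\bx^2$ is unnecessary (and slightly off because of the cubic term), since $D_1^+$ does not bound $\bx$ from above.
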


\begin{proof}
	The proof follows directly from our previous analysis. In particular, the second item is implied by the stability properties of $\cW_1^\cl|_{\left\{ \br=0 \right\}}$, $\cW_1^\cl|_{\left\{ \be=0 \right\}}$, and the fact that $\br^2\be=\ve$.
\end{proof}

 The closed-loop dynamics corresponding to \eqref{eq:K1vdp} under the controller \eqref{eq:vdp-u1} are as sketched in Figure \ref{fig:K1vdpCL}.

\begin{figure}[htbp]\centering
	
	\begin{tikzpicture}
	\node at (-4,0){
	\begin{tikzpicture}
		\node at (0,0){
		\includegraphics{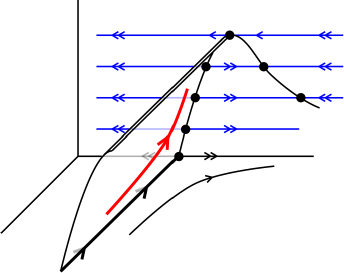}
		};
		\node at (2.6,-.36){$\bx$};
		\node at (-1.5,2.5){$\br$};
		\node at (-3,-1.8){$\be$};
		\node at (-1.75,-2.65){\small$\cE_{1,+}$};
		\node at (.75,.35){\small$\cM_{1,+}^\txtr$};
	\end{tikzpicture}
	
	};	
	\node at (4,0){
	\begin{tikzpicture}
		\node at (0,0){
		\includegraphics{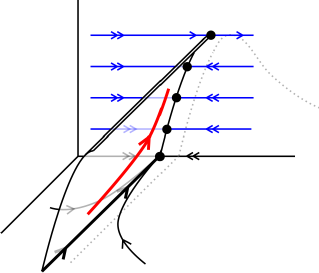}
		};
		\node at (2.55,-.35){$\bx$};
		\node at (-1.4,2.5){$\br$};
		\node at (-2.8,-1.8){$\be$};
		\node at (-1.95,-2.65){\small$\cE_{1}^\cl$};
		\node at (.65,.35){\small$\cM_{1}^\cl$};
	\end{tikzpicture}
	};		
	\end{tikzpicture}	

	\caption{On the left we show the qualitative behavior of the open-loop (that is with $\bu=0$) system \eqref{eq:K1vdp}, while on the right we show the closed-loop system obtained with the controller of Proposition \ref{prop:vdpK1}. In both cases, the $2$-dimensional surface illustrates the centre manifolds $\cW_{1,+}$ on the left and $\cW_{1}^\cl$. The relative position of $\cW_{1}^\cl$ with respect to $\cW_{1,+}$ is determined by $\bx^*$. In the sketch on the right we show that $\cW_1^\cl$ is to the left of $\cW_{1,+}$, which is indicated by the dashed curves.}
	\label{fig:K1vdpCL}
\end{figure}

To finalize this section, we blow-down the controller of Proposition \ref{prop:vdpK1}, as it will be used in the forthcoming section.

\begin{lemma}\label{lemm:u1-vdp} Let $u_1$ denote the blow-down of $\bu$. Then 
	\begin{equation}
		u_1 = -F_0-F_{\bx^*} + v_1,
	\end{equation}
	where
	\begin{equation}
		\begin{split}
			F_{\bx^*}(x,y,\ve) &= -y + (x-\bx^*\sqrt{y})^2 - \frac{(x-\bx^*\sqrt{y})^2\ve}{2y} - \frac{1}{3}(x-\bx^*\sqrt{y})^3\\
			v_1(y,\ve) &= \frac{ 2\phi +\bx^*\sqrt{y} }{\phi}\left( -y + \phi^2-\frac{\ve}{2y}\phi^2-\frac{1}{3}\phi^3 \right) \\
			&- \left( \frac{\ve}{y}\phi+\sqrt{y}\phi^2+k_1\sqrt{y}\right)(x-\phi-\bx^*\sqrt{y}),
		\end{split}
	\end{equation}
	and where $\phi=\phi(y,\ve)$ is defined by $\cS_\ve^\txtr$, that is by $\cS_\ve^\txtr = \left\{ x=\phi(y,\ve) \right\}$.
\end{lemma}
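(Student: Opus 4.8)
The plan is to obtain $u_1$ by directly blowing down the controller $\bu$ of Proposition~\ref{prop:vdpK1}, i.e.\ by inverting the directional blow-up \eqref{eq:vdpK1blowup} on the branch $y>0$. That inverse reads $\br=\sqrt{y}$, $\bx=x/\sqrt{y}$ and $\be=\ve/y$, and since $u=\br^2\bu=y\,\bu$, the blow-down is simply $u_1=y\,\bu$. Hence the whole statement reduces to substituting these relations into the three summands $-f_1(\br,\be,\bx)$, $-f_1(\br,\be,\bx-\bx^*)$ and $\bv(\be,\bx)$ of $\bu$ and multiplying by $y$.

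First I would handle the two copies of $f_1$. Writing $\bx-\bx^*=(x-\bx^*\sqrt{y})/\sqrt{y}$ and substituting $\br=\sqrt{y}$, $\be=\ve/y$, a direct calculation shows that $y\,f_1(\br,\be,\bx)=F_0$ and $y\,f_1(\br,\be,\bx-\bx^*)=F_{\bx^*}$, with $F_{\bx^*}$ exactly as stated (here $F_0$ is $F_{\bx^*}$ with $\bx^*=0$): the quadratic term reproduces $(x-\bx^*\sqrt{y})^2$, the term $-\tfrac12\bx^2\be$ produces the factor $\ve/(2y)$, and $-\tfrac13\br\bx^3$ produces the cubic. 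This already accounts for the first two summands $-F_0-F_{\bx^*}$ of $u_1$.

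The only step beyond bookkeeping is the blow-down of $\bv$, which requires relating $\phi_1$ to $\phi$. For this I would use that the open-loop centre manifold $\cW_{1,+}=\{\bx=\phi_1(\br,\be)\}$ corresponds under the blow-up to $\cS_\ve^\txtr=\{x=\phi(y,\ve)\}$; since $x=\br\bx=\sqrt{y}\,\phi_1$, this gives the key identification $\phi(y,\ve)=\sqrt{y}\,\phi_1$, i.e.\ $\phi_1=\phi/\sqrt{y}$. Substituting this together with $\br=\sqrt{y}$, $\be=\ve/y$ into $\bv$ and multiplying by $y$ then yields $v_1$: the prefactor $(2\phi_1+\bx^*)/\phi_1$ becomes $(2\phi+\bx^*\sqrt{y})/\phi$, the parenthesised factor scales exactly as the $f_1$ terms above, and the variational factor blows down using $\bx-\bx^*-\phi_1=(x-\phi-\bx^*\sqrt{y})/\sqrt{y}$. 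The main obstacle is therefore purely the bookkeeping of the half-integer powers of $y$ coming from $\sqrt{y}=\br$: one must track them through every monomial of $f_1$ and $\bv$, and especially through the several powers of $\phi_1$ appearing in the variational coefficient $\be\phi_1+\br\phi_1^2+k_1$, to confirm that all non-integer powers combine into the claimed expression. No additional dynamical information is needed; once $\phi_1=\phi/\sqrt{y}$ is fixed, the lemma is an algebraic identity.
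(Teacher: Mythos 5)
Your proof is exactly the paper's: the published argument consists of substituting the blow-up relations \eqref{eq:vdpK1blowup} into the controller \eqref{eq:vdp-u1}, multiplying by $r_1^2=y$, and identifying $\phi(y,\ve)=\sqrt{y}\,\phi_1$, which is precisely the computation you describe. One remark: if you actually carry out the bookkeeping you promise for the variational coefficient, the term $r_1\phi_1^2$ blows down to $\phi^2$ rather than the printed $\sqrt{y}\,\phi^2$, so the statement appears to contain a typo there; your identification $\phi_1=\phi/\sqrt{y}$ is nonetheless the one the paper uses.
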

\begin{proof}
	The expression of $u_1$ follows from straightforward computations using \eqref{eq:vdpK1blowup} in \eqref{eq:vdp-u1}. 
	To check $\phi$ is as stated, note that the blow-down induces the relation $\left\{ x_1=\phi_1 \right\} \; \leftrightarrow\; \left\{ x=\sqrt{y}\Phi(\phi_1)=\phi \right\}$, where by $\Phi(\phi_1)$ we are denoting the blow-down of $\phi_1$.

\end{proof}

\subsection{Composite controller and proof of Proposition \ref{thm:canardsvdp}}\label{sec:composite}

In this section we gather the controllers designed in the central chart $K_2$ and in the directional chart $K_1$ into a single one. Our arguments follow from the next general design methodology.

\begin{enumerate}[leftmargin=*]
	\item Let us start with an open-loop vector field $X:\R^N\to\R^N$ such that $X(0)=0$ (here possible parameters $\lambda\in\R^p$ are already included in the vector field by the trivial equation $\dot\lambda=0$).
	\item Let $\cB=\bbS^{N-1}\times \cI$ where $\bbS$ is the unit sphere and $\cI\subseteq\R$ is an interval that contains the origin. Here we shall be interested in $\cI=[0,r_0]$, $r_0>0$. Recall that the blow-up map is defined as $\Phi:\cB\to\R^N$. Moreover, the blow-up transformation induces the so-called ``blown-up'' vector field $\bar X$, which is the vector field that makes the following diagram commute.
	\begin{center}
		\begin{tikzpicture}\centering
		\node (n1) at (0,0) {$\cB$};
		\node (n2) at (2.5,0) {$\R^N$};
		\node (n3) at (0,-2) {$T\cB$};
		\node (n4) at (2.5,-2) {$T\R^N$};
		\draw[->,>=latex] (n1)--(n2) node[midway, above]{$\Phi$};
		\draw[->,>=latex] (n3)--(n4) node[midway, above]{$\txtD\Phi$};
		\draw[->,>=latex] (n1)--(n3) node[midway, left]{$\bar X$};
		\draw[->,>=latex] (n2)--(n4) node[midway, right]{$X$};
	\end{tikzpicture}
	\end{center}
	In other words, $\bar X$ and $X$ are related by the push-forward of $\bar X$ by $\Phi$, that is $\Phi_*(\bar X)=X$, in the sense $\txtD\Phi\circ\bar X=X\circ\Phi$\footnote{Recall that $\bar X$ is well defined: for $r>0$ because of $\Phi|_{\left\{r>0\right\}}$ being a diffeomorphism, and on $r=0$ due to continuous extension to the origin, see \cite{kuehn2015multiple}. Moreover, if the origin is nilpotent, one defines the \emph{desingularized vector field} $\tilde X=\frac{1}{r^k}\bar X$ for some $k>0$, which is smoothly equivalent to $\bar X$ for $r>0$, and all the forthcoming arguments hold equivalently for $\tilde X$.}.
	
	\item Let $\cA=\left\{ (K_i,\Phi_i)\right\}$, with $i=1,\ldots,M$, be a smooth atlas of $\cB$. This means that $(K_i,\Phi_i)$ is a chart of $\cB$, the open sets $K_i$ cover $\cB$, and $\Phi_i:K_i\subset\cB\to\R^N$ is a diffeomorphism. Then, there are local vector fields $\bar X_i$ defined on $K_i$ and given by $\bar X_i=\txtD \Phi_i^{-1}\circ X\circ\Phi_i$.
	
	\item On each chart $K_i$, let us introduce a local controller $\bar u_i$, and define as $\bar X_i^{\cl}\coloneqq\bar X_i + \bar u_i$ the local closed-loop vector field. Naturally, $\bar u_i$ is a local vector field on $K_i$.

	\item Let $\bar\psi_i:K_i\to\R$ be a bump function with compact support $\bar \cN_i\subset K_i$. We choose each $\bar \cN_i$ such that if $K_i\cap K_j\neq\emptyset$ then $\bar \cN_i\cap \bar \cN_j\neq\emptyset$ as well. Note that 
	\begin{equation}
		\bar\varphi_i\coloneqq\frac{\bar\psi_i}{\sum_{i=1}^M\bar\psi_i}
	\end{equation}
	is a partition of unity subordinate to the open cover $\left\{ K_i\right\}_{i=1}^M$.
	\item The sum
	\begin{equation}
		\bar u\coloneqq\sum_{1=1}^M \bar\varphi_i \bar u_i
	\end{equation}
	is, by virtue of the partition of unity, well defined as a global controller on $\cB$. Therefore, the global closed-loop vector field $\bar X^\cl\coloneqq\bar X+\bar u$ is also well defined.

	\item Let us now blow-down $\bar X^\cl$. To be more precise, we now define the closed-loop vector field $X^\cl$ on $\R^N$ by $\Phi_*(\bar X^\cl)=X^\cl$. So, we have
	\begin{equation}
	\begin{split}
		X^\cl = \Phi_*(\bar X^\cl) &= \Phi_*( \bar X+\bar u ) = \Phi_*(\bar X) + \Phi_*(\bar u)=X+ \Phi_*(\bar u),
	\end{split}
	\end{equation}
	where we have used the fact that the push-forward is linear \cite{lee2013smooth}. Next we define $u\coloneqq\Phi_*(\bar u)$ and compute
	\begin{equation}
		\begin{split}
			u=\Phi_*(\bar u) &=\Phi_*\left( \sum_{i=1}^M\bar\varphi_i\bar u_i \right)=\sum_{i=1}^M\Phi_*(\bar\varphi_i\bar u_i)=\sum_{i=1}^M(\Phi_i)_*(\bar\varphi_i\bar u_i)\\
			&=\sum_{i=1}^M \varphi_i\cdot(\Phi_i)_*(\bar u_i),
		\end{split}
	\end{equation}
	where $\varphi_i\coloneqq\bar\varphi_i\circ\Phi_i^{-1}$ for $i=1,\ldots,M$, and it is clear from its definition that $\left\{ \varphi_i \right\}$ is a partition of unity a neighborhood of the origin $0\in\R^N$ subordinate to the open cover $\left\{ \Phi_i(K_i) \right\}$. 

\end{enumerate}

With the previous methodology we define the controller that stabilizes canard cycles of the van der Pol oscillator as 
\begin{equation}\label{eq:comp-u}
	u=\frac{1}{2}\psi_1u_1+\frac{1}{2}\psi_2u_2,
\end{equation}
where $u_1$ is as given by Lemma \ref{lemm:u1-vdp} and $u_2$ as in Theorem \ref{thm:fold-fast}, and where $\psi_1$ is a bump function with support $\cN_1$ containing the repelling branch $\cS_0^\txtr$ and $\cN_2$ the parabola $\left\{ y=x^2 \right\}$ around the origin. \ch{Although several choices for these neighbourhoods are possible, we recall an example given at the beginning of section \ref{sec:vdp}: } 
\begin{equation}
	\begin{split}
		\cN_1 &= \left\{ (x,y)\in\R^2 \, : \, |-y+x^2-\frac{1}{3}x^3|<\beta_1,\, 0<x<2,\, y_{\txtmin}<y<y_{h} \right\}\\
		\cN_2 &= \left\{ (x,y)\in\R^2 \, : \, |-y+x^2|<\beta_2, \, -x_\txtmin<x<x_{\txtmax} \right\},
	\end{split}
\end{equation}
with suitably chosen positive constants $\beta_1$, $\beta_2$, $x_{\txtmin}$, $x_{\txtmax}$, $y_{\txtmin}$, $y_{h}<\frac{4}{3}$. We note that one must choose \ch{$0<y_\txtmin\in\cO(\ve)$} in order to ensure that the slow manifold $\cS_\ve^\txtr$ is within distance $\cO(\ve)$ of the critical manifold $\cS_0^\txtr$. Here $y_h$ controls the height of the desired canard cycle, therefore $y_h<\frac{4}{3}$. The neighborhood $\cN_1$ and $\cN_2$ are sketched in Figure \ref{fig:vdp1}.

With the controller as in \eqref{eq:comp-u}, and given the analysis in Section \ref{sec:K1vdp}, one has that orbits of \eqref{eq:vdp} passing close to the origin follow closely the repelling branch of the slow manifold $\cS_\ve^\txtr$ up to a height determined by $y_h$. Once orbits leave the neighborhood $\cN_1\cup\cN_2$, they are governed by the open-loop dynamics. Finally the controller of Proposition \ref{thm:canardsvdp} is indeed \eqref{eq:comp-u}. We have just dropped the subscript of the constant $x_1^*$.


\section{Conclusions and Outlook}\label{sec:conclusions}

In this paper we have presented a methodology combining the blow-up method with Lyapunov-based control techniques to design a controller that stabilizes canard cycles. The main idea is to use a first integral in the blow-up space to regulate the canard cycle that the orbits are to follow. Later on, we have extended the previously developed method to control canard cycles in the van der Pol oscillator. Roughly speaking this procedure follows two steps: first one needs a controller that stabilizes a folded maximal canard within a small neighborhood of the canard point. Next, one needs to stabilize the unstable branch of the open-loop slow manifold and to tune the position of the closed-loop orbits with respect to it. This is essential to determine whether the closed-loop canard has a head or not. Finally one combines such controllers by means of a partition of unity. We have further shown that the proposed controller can be used to produce stable MMOs.

Several new questions and possible extensions arise from our work, and we would like to finish this paper by briefly mentioning a couple of ideas. First of all, it becomes interesting to adapt the controllers designed here to neuron models such as the FitzHugh-Nagumo, Morris-Lecar, or Hodgkin-Huxley models. Another relevant extension is to develop optimal controllers to control canards. Although from a theoretical point of view one would be interested in arbitrary cost functionals, some particular choices might be more suitable for applications. For instance, one may one want to design minimal energy controllers. It is also not completely clear whether the strategy of combining the blow-up method and control techniques still applies as the optimal controllers may be time-dependent. Finally, the notion of controlling MMOs definitely requires further investigation, as here we have just given a simple sample of the possibilities. Thus, for example, extending the ideas of this paper to higher-dimensional fast-slow systems with non-hyperbolic points is a direction to be pursued in the future.

\textbf{Acknowledgments:} The authors thank the anonymous reviewer for the comments that helped to improve this manuscript. HJK gratefully acknowledges support by a fellowship of the Alexander-von-Humboldt Foundation. CK acknowledges partial support by the DFG via the SFB/TR109 Discretization in Geometry and Dynamics and support by a Lichtenberg Professorship of the VolkswagenFoundation.

\bibliographystyle{plain} 
\bibliography{bib}

\begin{thebibliography}{10}

\bibitem{arnold2012singularities}
V.~I. Arnol'd, A.~N. Varchenko, and S.~M. Gusein-Zade.
\newblock {\em Singularities of Differentiable Maps: Volume I : Classification
  of Critical Points, Caustics and Wave Fronts}.
\newblock Springer Science \& Business Media, 2012.

\bibitem{ARTSTEIN20171603}
Z.~Artstein.
\newblock Asymptotic stability of singularly perturbed differential equations.
\newblock {\em Journal of Differential Equations}, 262(3):1603 -- 1616, 2017.

\bibitem{1531-3492_2019_8_3653}
Z.~Artstein.
\newblock Invariance principle in the singular perturbations limit.
\newblock {\em Discrete \& Continuous Dynamical Systems - B}, 24:3653, 2019.

\bibitem{artstein1997tracking}
Z.~Artstein and V.~Gaitsgory.
\newblock Tracking fast trajectories along a slow dynamics: A singular
  perturbations approach.
\newblock {\em SIAM Journal on Control and Optimization}, 35(5):1487--1507,
  1997.

\bibitem{artstein2002singularly}
Z.~Artstein and A.~Leizarowitz.
\newblock Singularly perturbed control systems with one-dimensional fast
  dynamics.
\newblock {\em SIAM Journal on Control and Optimization}, 41(2):641--658, 2002.

\bibitem{banasiak2014methods}
J.~Banasiak and M.~Lachowicz.
\newblock {\em Methods of small parameter in mathematical biology}.
\newblock Springer, 2014.

\bibitem{benoit1981chasse}
E.~Beno{\^\i}t.
\newblock Chasse au canard.
\newblock {\em Collectanea Mathematica}, 31-32(1-3):37--119, 1981.

\bibitem{benoit1990canards}
E.~Beno{\^\i}t.
\newblock Canards et enlacements.
\newblock {\em Publications Math{\'e}matiques de l'Institut des Hautes
  {\'E}tudes Scientifiques}, 72(1):63--91, 1990.

\bibitem{brons1991canard}
M.~Br{\o}ns and K.~Bar-Eli.
\newblock Canard explosion and excitation in a model of the
  belousov-zhabotinskii reaction.
\newblock {\em The Journal of Physical Chemistry}, 95(22):8706--8713, 1991.

\bibitem{chow1982time}
Joe~H Chow, G~Peponides, PV~Kokotovic, B~Avramovic, and JR~Winkelman.
\newblock {\em Time-scale modeling of dynamic networks with applications to
  power systems}, volume~46.
\newblock Springer, 1982.

\bibitem{coppel2006dichotomies}
William~Andrew Coppel.
\newblock {\em Dichotomies in stability theory}, volume 629.
\newblock Springer, 2006.

\bibitem{desroches2012mixed}
M.~Desroches, J.~Guckenheimer, B.~Krauskopf, C.~Kuehn, H.~M. Osinga, and
  M.~Wechselberger.
\newblock Mixed-mode oscillations with multiple time scales.
\newblock {\em SIAM Review}, 54(2):211--288, 2012.

\bibitem{dmitriev2006singular}
M.~G. Dmitriev and G.~A. Kurina.
\newblock Singular perturbations in control problems.
\newblock {\em Automation and Remote Control}, 67(1):1--43, 2006.

\bibitem{dorfler2012synchronization}
Florian Dorfler and Francesco Bullo.
\newblock Synchronization and transient stability in power networks and
  nonuniform kuramoto oscillators.
\newblock {\em SIAM Journal on Control and Optimization}, 50(3):1616--1642,
  2012.

\bibitem{dumortier1996canard}
F.~Dumortier and R.~Roussarie.
\newblock {\em Canard cycles and center manifolds}, volume 577.
\newblock American Mathematical Society, 1996.

\bibitem{durham2008feedback}
J.~Durham and J.~Moehlis.
\newblock Feedback control of canards.
\newblock {\em Chaos: An Interdisciplinary Journal of Nonlinear Science},
  18(1):015110, 2008.

\bibitem{eckhaus2011matched}
W.~Eckhaus.
\newblock {\em Matched asymptotic expansions and singular perturbations},
  volume~6.
\newblock Elsevier, 2011.

\bibitem{ermentrout2010mathematical}
G.~B. Ermentrout and D.~H. Terman.
\newblock {\em Mathematical foundations of neuroscience}, volume~35.
\newblock Springer Science \& Business Media, 2010.

\bibitem{fenichel1971persistence}
N.~Fenichel.
\newblock Persistence and smoothness of invariant manifolds for flows.
\newblock {\em Indiana University Mathematics Journal}, 21(3):193--226, 1971.

\bibitem{fenichel1979geometric}
N.~Fenichel.
\newblock Geometric singular perturbation theory for ordinary differential
  equations.
\newblock {\em Journal of differential equations}, 31(1):53--98, 1979.

\bibitem{fitzhugh1969mathematical}
R.~FitzHugh.
\newblock Mathematical models of excitation and propagation in nerve.
\newblock {\em Biological engineering}, pages 1--85, 1969.

\bibitem{Fridman2001ADS}
E.~Fridman.
\newblock A descriptor system approach to nonlinear singularly perturbed
  optimal control problem.
\newblock {\em Automatica}, 37:543--549, 2001.

\bibitem{Fridman2001StatefeedbackHC}
E.~Fridman.
\newblock State-feedback h control of nonlinear singularly perturbed systems.
\newblock 2001.

\bibitem{hairer2010solving}
E.~Hairer and G.~Wanner.
\newblock {\em Solving Ordinary Differential Equations II: Stiff and
  Differential-Algebraic Problems}, volume~14.
\newblock Springer, 2010.

\bibitem{hek2010geometric}
G.~Hek.
\newblock Geometric singular perturbation theory in biological practice.
\newblock {\em Journal of mathematical biology}, 60(3):347--386, 2010.

\bibitem{izhikevich2007dynamical}
E.~M. Izhikevich.
\newblock {\em Dynamical systems in neuroscience}.
\newblock MIT press, 2007.

\bibitem{jardon2019survey}
H.~Jardon-Kojakhmetov and C.~Kuehn.
\newblock A survey on the blow-up method for fast-slow systems.
\newblock {\em arXiv preprint arXiv:1901.01402}, 2019.

\bibitem{jardon2016model}
H.~Jard{\'o}n-Kojakhmetov, M.~Mu{\~n}oz-Arias, and J.~M.~A. Scherpen.
\newblock Model reduction of a flexible-joint robot: a port-hamiltonian
  approach.
\newblock {\em IFAC-PapersOnLine}, 49(18):832--837, 2016.

\bibitem{jardon2017model}
H.~Jard{\'o}n-Kojakhmetov and J.~M.~A. Scherpen.
\newblock {Model Order Reduction and Composite Control for a Class of Slow-Fast
  Systems Around a Non-Hyperbolic Point}.
\newblock {\em IEEE Control Systems Letters}, 1(1):68--73, 2017.

\bibitem{jardon2019stabilization}
H.~Jard{\'o}n-Kojakhmetov, J.~M.~A. Scherpen, and D.~del Puerto-Flores.
\newblock Stabilization of a class of slow--fast control systems at
  non-hyperbolic points.
\newblock {\em Automatica}, 99:13--21, 2019.

\bibitem{jones1995geometric}
C.~K. R.~T. Jones.
\newblock Geometric singular perturbation theory.
\newblock In {\em Dynamical Systems}, pages 44--118. Springer, 1995.

\bibitem{kaper2015geometry}
Hans~G Kaper, Tasso~J Kaper, and Antonios Zagaris.
\newblock Geometry of the computational singular perturbation method.
\newblock {\em Mathematical modelling of natural phenomena}, 10(3):16--30,
  2015.

\bibitem{kaper1999intro}
T.~J. Kaper.
\newblock An introduction to geometric methods and dynamical systems theory for
  singular perturbation problems.
\newblock In {\em Analyzing Multiscale Phenomena Using Singular Perturbation
  Methods}, volume~56, pages 85--131. American Mathematical Soc., 1999.

\bibitem{kevorkian2012multiple}
J.~K. Kevorkian and J.~D. Cole.
\newblock {\em Multiple scale and singular perturbation methods}, volume 114.
\newblock Springer Science \& Business Media, 2012.

\bibitem{kokotovic1999singular}
P.~Kokotovi{\'c}, H.~K. Khalil, and J.~O'Reilly.
\newblock {\em Singular perturbation methods in control: analysis and design},
  volume~25.
\newblock SIAM, 1999.

\bibitem{kokotovic1984applications}
P.~V. Kokotovi{\'c}.
\newblock Applications of singular perturbation techniques to control problems.
\newblock {\em SIAM review}, 26(4):501--550, 1984.

\bibitem{kokotovic1980singular}
P.~V. Kokotovi{\'c}, J.~J. Allemong, J.~R. Winkelman, and J.~H. Chow.
\newblock Singular perturbation and iterative separation of time scales.
\newblock {\em Automatica}, 16(1):23--33, 1980.

\bibitem{kokotovic1976singular}
P.~V. Kokotovi{\'c}, R.~E. O'Malley~Jr, and P.~Sannuti.
\newblock Singular perturbations and order reduction in control theory—an
  overview.
\newblock {\em Automatica}, 12(2):123--132, 1976.

\bibitem{krupa2001extending}
M.~Krupa and P.~Szmolyan.
\newblock Extending geometric singular perturbation theory to nonhyperbolic
  points---fold and canard points in two dimensions.
\newblock {\em SIAM journal on mathematical analysis}, 33(2):286--314, 2001.

\bibitem{krupa2001relaxation}
M.~Krupa and P.~Szmolyan.
\newblock Relaxation oscillation and canard explosion.
\newblock {\em Journal of Differential Equations}, 174(2):312--368, 2001.

\bibitem{kuehn2015multiple}
C.~Kuehn.
\newblock {\em Multiple time scale dynamics}, volume 191.
\newblock Springer, 2015.

\bibitem{kunkel2006differential}
P.~Kunkel and V.~Mehrmann.
\newblock {\em Differential-algebraic equations: analysis and numerical
  solution}, volume~2.
\newblock European Mathematical Society, 2006.

\bibitem{kurina2017discrete}
G.~A. Kurina, M.~G. Dmitriev, and D.~S. Naidu.
\newblock Discrete singularly perturbed control problems (a survey).
\newblock {\em Dynamics of Continuous, Discrete and Impulsive Systems Series B:
  Applications and Algorithms}, 24(5):335--370, 2017.

\bibitem{lasalle1960some}
J.~LaSalle.
\newblock {Some extensions of Liapunov's second method}.
\newblock {\em IRE Transactions on circuit theory}, 7(4):520--527, 1960.

\bibitem{lee2013smooth}
J.~M. Lee.
\newblock Smooth manifolds.
\newblock In {\em Introduction to Smooth Manifolds}, pages 1--31. Springer,
  2013.

\bibitem{mishchenko1980differential}
E.~F. Mishchenko and N.~Kh. Rozov.
\newblock {\em Differential equations with small parameters and relaxation
  oscillations}.
\newblock Springer, 1980.

\bibitem{moehlis2002canards}
J.~Moehlis.
\newblock {Canards in a Surface Oxidation Reaction}.
\newblock {\em Journal of Nonlinear Science}, 12(4):319--345, 2002.

\bibitem{nagumo1962active}
J.~Nagumo, S.~Arimoto, and S.~Yoshizawa.
\newblock An active pulse transmission line simulating nerve axon.
\newblock {\em Proceedings of the IRE}, 50(10):2061--2070, 1962.

\bibitem{naidu2002singular}
D.~S. Naidu.
\newblock Singular perturbations and time scales in control theory and
  applications: an overview.
\newblock {\em Dynamics of Continuous Discrete and Impulsive Systems Series B},
  9:233--278, 2002.

\bibitem{naidu2001singular}
D.~S. Naidu and A.~J. Calise.
\newblock Singular perturbations and time scales in guidance and control of
  aerospace systems: A survey.
\newblock {\em Journal of Guidance, Control, and Dynamics}, 24(6):1057--1078,
  2001.

\bibitem{OMalley1991}
R.~E. O'Malley, Jr.
\newblock {\em Singular perturbation methods for ordinary differential
  equations}, volume~89 of {\em Applied Mathematical Sciences}.
\newblock Springer-Verlag, New York, 1991.

\bibitem{sanders2007averaging}
J.~A. Sanders, F.~Verhulst, and J.~A. Murdock.
\newblock {\em Averaging methods in nonlinear dynamical systems}, volume~59.
\newblock Springer, 2007.

\bibitem{siciliano1988singular}
B.~Siciliano and W.~J. Book.
\newblock A singular perturbation approach to control of lightweight flexible
  manipulators.
\newblock {\em The International Journal of Robotics Research}, 7(4):79--90,
  1988.

\bibitem{spong1990control}
M.~W. Spong.
\newblock Control of flexible joint robots: a survey.
\newblock {\em Coordinated Science Laboratory Report no. UILU-ENG-90-2203,
  DC-116}, 1990.

\bibitem{takens1976constrained}
F.~Takens.
\newblock Constrained equations; a study of implicit differential equations and
  their discontinuous solutions.
\newblock In {\em Structural stability, the theory of catastrophes, and
  applications in the sciences}, pages 143--234. Springer, 1976.

\bibitem{verhulst2005methods}
F.~Verhulst.
\newblock {\em Methods and applications of singular perturbations: boundary
  layers and multiple timescale dynamics}, volume~50.
\newblock Springer Science \& Business Media, 2005.

\bibitem{wechselberger2012propos}
M.~Wechselberger.
\newblock A propos de canards.
\newblock {\em Transactions of the American Mathematical Society},
  364(6):3289--3309, 2012.

\end{thebibliography}
\end{document}